\documentclass[a4paper,12pt]{scrartcl}
\usepackage[english]{babel}

\usepackage{microtype}
\usepackage{fontspec}
\usepackage{amssymb}
\usepackage[bold-style=ISO]{unicode-math}
\usepackage{mathtools}
\usepackage{tabularray}
\UseTblrLibrary{amsmath}
\UseTblrLibrary{booktabs}
\usepackage[amsmath,thmmarks,hyperref]{ntheorem}
\usepackage{csquotes}
\usepackage[shortlabels]{enumitem}
\usepackage{needspace}
\usepackage{siunitx}

\usepackage{hyperref}

\makeatletter\newcommand\customitem[1]{\item[#1{\MakeLinkTarget[item]{}}]\def\@currentlabel{#1}}\makeatother

\usepackage[
	backend=biber,
	style=numeric,
	datamodel=reviewids,
	giveninits = true,
	sorting = nyt,
	maxnames = 8,
	sortlocale = en_US,
	doi=false
]{biblatex}

\DeclareFieldFormat{zblnumber}{%
  \textsc{Zbl}\addcolon\space
  \href{https://zbmath.org/#1}{\nolinkurl{#1}}%
}

\DeclareFieldFormat{jfmnumber}{%
  \textsc{JFM}\addcolon\space
  \href{https://zbmath.org/#1}{\nolinkurl{#1}}%
}

\DeclareFieldFormat{mrnumber}{%
  \textsc{MR}\addcolon\space
  \href{https://mathscinet.ams.org/mathscinet/article?mr=#1}{\nolinkurl{#1}}%
}

\newbibmacro*{mathlinks}{%
  \printfield{zblnumber}%
  \newunit\newblock
  \printfield{jfmnumber}%
  \newunit\newblock
  \printfield{mrnumber}%
}

\renewbibmacro*{doi+eprint+url}{%
  \iftoggle{bbx:doi}
    {\printfield{doi}}
    {}%
  \newunit\newblock
  \usebibmacro{mathlinks}%
  \newunit\newblock
  \iftoggle{bbx:eprint}
    {\usebibmacro{eprint}}
    {}%
  \newunit\newblock
  \iftoggle{bbx:url}
    {\usebibmacro{url+urldate}}
    {}%
}

\newbibmacro{string+doi}[1]{%
	 \iffieldundef{doi}{#1}{\href{http://dx.doi.org/\thefield{doi}}{#1}}}
	 \DeclareFieldFormat{title}{\usebibmacro{string+doi}{\mkbibemph{#1}}}
	 \DeclareFieldFormat[article]{title}{\usebibmacro{string+doi}{\mkbibquote{#1}}}

\bibliography{mendelsohn}

\usepackage{tikz}

\AtBeginDocument{\renewcommand{\setminus}{\mathop{\backslash}}}

\addtokomafont{disposition}{\rmfamily}

\newcommand{\thmheadernewline}{\Needspace*{3\baselineskip}\leavevmode\par\nobreak\noindent\ignorespaces}

\newcommand{\F}{\mathbb{F}}

\newcommand{\C}{\mathbb{C}}
\newcommand{\N}{\mathbb{N}}
\newcommand{\Z}{\mathbb{Z}}

\newcommand{\vek}[1]{\symbf{#1}}

\newcommand{\qbinom}[3]{\genfrac{[}{]}{0pt}{}{#1}{#2}_{#3}}

\theorembodyfont{\upshape}
\theoremseparator{.}
\theoremsymbol{\ensuremath{\diamond}}
\newtheorem{theorem}{Theorem}
\newtheorem{corollary}[theorem]{Corollary}
\newtheorem{lemma}{Lemma}[section]
\newtheorem{fact}[lemma]{Fact}

\newtheorem{definition}[lemma]{Definition}

\newtheorem{remark}[lemma]{Remark}
\newtheorem{example}[lemma]{Example}

\theoremheaderfont{\itshape}
\theorembodyfont{\upshape}
\theoremstyle{nonumberplain}
\theoremseparator{.}
\theoremsymbol{\rule{1ex}{1ex}}
\newtheorem{proof}{Proof}

\numberwithin{table}{section}

\DeclareMathOperator{\OA}{OA}
\DeclareMathOperator{\PG}{PG}

\DeclareMathOperator{\rk}{rk}

\providecommand{\norm}[1]{\lVert#1\rVert}

\title{Intersection numbers\\for designs in regular semilattices}
\author{Michael Kiermaier\thanks{
University of Bayreuth, Institute for Mathematics, 95440 Bayreuth, Germany
\newline
email:~\texttt{michael.kiermaier@uni-bayreuth.de}
\newline
homepage:~\url{https://mathe2.uni-bayreuth.de/michaelk/}}
\and
Lukas Klawuhn\thanks{
Paderborn University, Department of Mathematics, 33098 Paderborn, Germany
\newline
email:~\texttt{klawuhn@math.upb.de}}
}
\date{August 14, 2026}

\begin{document}

\maketitle

% Prevent the abstract from spilling over to page 2 when using XeLaTeX.
% This would not be required with LuaLaTeX, which is not available on arXiv.
\vspace{-6mm}

\begin{abstract}
	We generalize intersection numbers for combinatorial designs to designs in finite meet-semilattices satisfying suitable regularity conditions.
	While designs in regular semilattices go back to Delsarte, our regularity assumptions are weaker than his and need not give rise to an association scheme.
	In this framework, we extend Mendelsohn's equations, prove a generalized Singleton bound with Steiner systems as equality cases, and determine the block intersection distribution at any block of a Steiner system.
	In particular, this distribution is independent of the chosen block.

	Specializing to several classical semilattice families, our results recover a number of well-known distributions in coding and design theory.
	In the Hamming and the $q$-Hamming (or bilinear forms) schemes, they give the local distance distributions of MDS and MRD codes, respectively.
	In the Johnson and $q$-Johnson (or Graßmann) schemes, they reproduce the block intersection distribution of classical and $q$-analog Steiner systems, equivalently the distance distribution of diameter-perfect constant-weight codes and diameter-perfect constant-dimension subspace codes.
	For the $q$-Johnson schemes, to the best of our knowledge, this result is new.
	As a further illustration, we apply our theory to designs of perfect matchings.

	Our approach provides a unified treatment of these cases in the strongest form known in the literature, determining the distribution relative to each individual block or codeword, without averaging and without linearity or additivity assumptions.
	Moreover, it identifies the natural double-counting objects underlying these distributions, leading to formulas in the regularity parameters of the semilattice and avoiding the more cumbersome expressions that arise in eigenvalue-based approaches via the ambient association scheme.
\end{abstract}

\section{Introduction}
\subsection{Historical overview}
In parallel developments, Mendelsohn and Goethals (1969--1971) and, independently, Oberschelp (1972) derived a system of linear equations for the intersection numbers of a combinatorial design \cite{Mendelsohn-1969-NAMS16[6]:984,Mendelsohn-1971,Goethals-1970-InstStatistMimeoSer600_29,Oberschelp-1972-MPSemBNF19:55-67}, now commonly referred to as the \emph{Mendelsohn equations}.
An alternative formulation, parametrizing the lower intersection numbers by the higher ones, was later obtained by Köhler~\cite{Koehler-1989-DM73[1-2]:133-142} and, independently, by Harnau~\cite[Satz~1]{Harnau-1988-RostMKoll34:47-52}.
The Mendelsohn equations impose strong restrictions on the possible intersection structure of a design and have been used to establish the non-existence of designs for certain admissible parameter sets.
As an example, we mention the admissible parameter set $5$-$(19,9,7)$, whose non-existence was proved by Harnau in 1988~\cite{Harnau-1988-RostMKoll34:47-52}.
Besides yielding non-existence results, the combinatorial information encoded in the Mendelsohn equations is also useful for narrowing the search space for designs with prescribed parameters.

In~1976, Delsarte introduced a regular semilattice structure underlying certain association schemes, within which the combinatorial notion of a design extends naturally, thus placing the classical theory in a substantially broader setting \cite{Delsarte-1976-JCTSA20[2]:230-243}.
His article included four families of regular semilattices that may now be regarded as classical:
the \emph{Johnson} and \emph{Hamming} semilattices and their $q$-analogs, namely the \emph{$q$-Johnson} (or \emph{Graßmann}) and \emph{$q$-Hamming} (or \emph{bilinear forms}) semilattices.
For the first three families, the resulting designs specialize to classical block designs, subspace designs, and orthogonal arrays, all of which have been studied extensively as independent combinatorial objects.
In the Hamming and $q$-Hamming semilattices, Steiner systems, that is, designs of index one, correspond to MDS and MRD codes, respectively, which are themselves objects of major importance in coding theory.
The local distance distribution of these codes is uniquely determined by their parameters.
The corresponding formulas have been discovered repeatedly and independently in the literature for both families, based on several approaches:
One may employ the theory of dual codes and the MacWilliams equations, although this is, at least in its basic form, restricted to linear codes.
Alternatively, one may use the theory of association schemes, which removes this restriction but, in the unrestricted setting, yields only the inner distribution of the code, an essentially averaged and therefore weaker version of the local distance distribution, and involves rather intricate expressions arising from the eigenvalues of the corresponding scheme.
The most direct approach is combinatorial rather than algebraic and based on a double counting argument, which, to the best of our knowledge, is found in the literature only for MDS codes.

\subsection{New results and outline of the article}

In this article, we investigate finite regular posets, introduced in Section~\ref{sec:regular posets}, whose regularity requirements are somewhat less restrictive than those studied by Delsarte and which therefore do not necessarily support the structure of an association scheme.
Section~\ref{sec:classical_lattices} then presents the four classical families of regular semilattices in algebraic combinatorics, namely the Hamming lattice, the Johnson lattice, and their $q$-analogs.
Section~\ref{sec:poset_design} provides the necessary preliminaries on designs in regular posets, including the divisibility conditions of Theorem~\ref{thm:divisibility}, which, to the best of our knowledge, have not previously appeared in this level of generality.
An overview of designs in the four classical families is given in Example~\ref{ex:designs_classical_semilattices}.

Section~\ref{sec:semilattice_mendelsohn} adds a $\wedge$-semilattice structure to the regularity assumptions.
In Theorem~\ref{thm:general:mendelsohn}, based on a double-counting argument, we establish the generalized Mendelsohn equations, and we derive their alternative Köhler form.
For the $q$-Hamming semilattices, and partially also for the ordinary Hamming semilattices, this result appears to be new and is stated as Corollary~\ref{cor:hamming:mendelsohn}.
In Section~\ref{sec:steiner}, we introduce codes in regular semilattices.
We establish an extended Singleton bound in Theorem~\ref{thm:singleton}, including a characterization of the equality cases as Steiner systems.
Based on Theorem~\ref{thm:general:mendelsohn}\ref{thm:general:mendelsohn:koehler}, we derive in Theorem~\ref{thm:general:steiner_block_intersection_distribution} a formula for the block intersection distribution of Steiner systems, showing in particular that it is independent of the choice of the base block.
For the $q$-Johnson semilattices, this result has not been published previously and is stated as Corollary~\ref{cor:johnson_steiner_intersection_distribution}.
Specializing to the Hamming and $q$-Hamming semilattices recovers the local distance distributions of MDS and MRD codes.
Both of these results have been rediscovered several times independently in the literature, sometimes in weaker forms and sometimes via alternative, more involved proofs.
A detailed historical overview is provided in Example~\ref{ex:codes_hamming} and Example~\ref{ex:codes_q_hamming}.

Finally, in Section~\ref{sec:matchings}, we apply our theory to the semilattice of matchings of the complete graph, whose top level consists of the perfect matchings.
Since the corresponding semilattice is not Delsarte-regular, this example illustrates that the additional level of generality considered here extends beyond Delsarte's framework and leads to new results, including the Mendelsohn equations in Corollary~\ref{cor:matchings:mendelsohn} and the formula for the block intersection distribution of Steiner systems of perfect matchings in Corollary~\ref{cor:matchings:steiner_block_intersection_distribution}.

\section{Regular posets}\label{sec:regular posets}

Our objective is to extend intersection numbers of designs, the associated Mendelsohn equations, and results derived from them to a substantially broader class of designs, namely designs in regular semilattices, as introduced by Delsarte in~\cite{Delsarte-1976-JCTSA20[2]:230-243}.
This section provides the necessary preliminaries.
These require only a finite poset with certain regularity properties and do not yet assume any semilattice structure.
In fact, our setting is more general than that of~\cite{Delsarte-1976-JCTSA20[2]:230-243},%
\footnote{
        Compared to~\cite{Delsarte-1976-JCTSA20[2]:230-243}, we replace the regularity property~\ref{regular:pi} by the weaker property~\ref{regular:theta}; moreover, most statements in this section remain valid even without~\ref{regular:nu}, see Remark~\ref{rem:general_without_nu}.
        In addition, we do not require $(X,{\leq})$ to be graded, and we allow complex-valued functions~$\beta$ (although the extension from real-valued functions is straightforward).
}
and is also slightly more general than the one considered in~\cite{Suda-2012-DM312[10]:1827-1831}.
We note, however, that even this level of generality does not fully capture all design-like structures studied in the literature.
Examples include designs in finite polar spaces~\cite{Kiermaier-Schmidt-Wassermann-2025-DCC93[4]:1143-1162} and designs of perfect matchings~\cite{Bamberg-Klawuhn-2026-AlgebrComb9[3]:789-809}.
Other general frameworks for designs and/or generalizations of regular semilattices have been studied in~\cite{Martin-2001-DIMACS56:223-239}, \cite[Sec.~4]{Guo-Ma-Wang-2013-SChinaM56[11]:2393-2407}, and~\cite{Zhang-Liu-Li-2016-DiscreteMathAlgorithmsAppl8[1]:P1650005}.
For the fundamental theory of posets, the reader is referred to~\cite[Ch.~3]{Stanley-2012-EnumerativeCombinatoricsI-2nd}.

Let $(X,{\leq})$ be a finite poset with bottom element $\bot$.
For $a,b\in X$, we write $a \lessdot b$ if $b$ covers $a$, i.\,e., if $a < b$ and $a \leq x \leq b$ implies $x\in\{a,b\}$.
As usual, the \emph{length} $\ell(a,b)$ of elements $a,b\in X$ with $a \leq b$ is defined as the length $r$ of the longest chain $a = x_0 < \ldots < x_r = b$ from $a$ to $b$.%
\footnote{
	Such a chain necessarily has the form $a = x_0 \lessdot \ldots \lessdot x_r = b$.
}
Moreover, the \emph{height} of an element $a\in X$ is $h(a) \coloneqq \ell(\bot,a)$.
By concatenation of chains, for all $a,b,c\in X$ with $a \leq b \leq c$ we have $\ell(a,c) \geq \ell(a,b) + \ell(b,c)$ and, as a special case, $h(b) \geq h(a) + \ell(a,b)$.
We extend the notion of the height to non-empty subsets $A \subseteq X$ via $h(A) = \max_{x\in A} h(x)$.

The height function partitions $X$ into the \emph{levels}
\[
	X_r = h^{-1}(\{r\}) = \{x\in X \mid h(x) = r\}
\]
where $r\in\{0,\ldots,n\}$ and $n = h(X)$ denotes the largest height in $X$.
Clearly, $X_0 = \{\bot\}$, and each level $X_r$ is an antichain.
In what follows, the top level $\Omega = X_n$ will play a special role.

\begin{lemma}\label{lem:chain_Xr}
	Let $(X,{\leq})$ be a finite poset with bottom element $\bot$.
	Let $a \in X$ and let $r = h(a)$.
	Then there exists a chain
	\[
		\bot = x_0 < \ldots < x_r = a
	\]
	of length $r$.
	Moreover, for each such chain, $h(x_s) = s$ for all $s\in\{0,\ldots,r\}$.
\end{lemma}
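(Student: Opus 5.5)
Existence comes straight from the definitions, and the second claim is a squeeze between two applications of the chain-concatenation inequality already noted in the text. By definition, $h(a) = \ell(\bot,a)$ is the length of the longest chain from $\bot$ to $a$. Since $X$ is finite, there are only finitely many chains between $\bot$ and $a$, and at least one exists because $\bot \leq a$. So the maximum is attained, and a chain $\bot = x_0 < \ldots < x_r = a$ of length exactly $r = h(a)$ exists.

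For the second statement, fix such a maximal chain and an index $s\in\{0,\ldots,r\}$. I will prove the two inequalities $h(x_s) \geq s$ and $h(x_s) \leq s$.

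For the lower bound, the initial segment $\bot = x_0 < \ldots < x_s$ is a chain of length $s$ from $\bot$ to $x_s$. Hence $h(x_s) = \ell(\bot,x_s) \geq s$.

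For the upper bound, the final segment $x_s < \ldots < x_r = a$ is a chain of length $r-s$, so $\ell(x_s,a) \geq r - s$. The inequality $h(b) \geq h(a') + \ell(a',b)$, stated just before the lemma and applied with $a' = x_s$ and $b = a$, gives
\[
	r = h(a) \geq h(x_s) + \ell(x_s,a) \geq h(x_s) + r - s .
\]
Hence $h(x_s) \leq s$. Concretely, this inequality comes from concatenating a longest chain from $\bot$ to $x_s$ with the segment from $x_s$ to $a$; that gives a chain from $\bot$ to $a$ of length $h(x_s) + r - s$, which cannot exceed $r$.

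Combining the two bounds yields $h(x_s) = s$. There is no real obstacle here. The only point needing care is the upper bound, and it is exactly the maximality of the chain, used through the concatenation inequality.
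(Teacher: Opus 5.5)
Your proof is correct and follows essentially the same route as the paper's. Existence comes from the definition of height, the initial segment gives $h(x_s)\geq s$, and the concatenation inequality $h(a)\geq h(x_s)+\ell(x_s,a)$ together with $\ell(x_s,a)\geq r-s$ gives $h(x_s)\leq s$.
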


\begin{proof}
	The existence follows from $h(a) = r$ and the definition of the height.

	Let $\bot = x_0 < \ldots < x_r$ be a chain of length $r$ with $h(x_r) = r$, and let $s\in \{0,\ldots,r\}$.
	The subchain $\bot = x_0 < \ldots < x_s$ has length $s$, hence $h(x_s) \geq s$.

	Similarly, the subchain $x_s < \ldots < x_r$ has length $r-s$, hence $\ell(x_s, x_r) \geq r-s$.
	Now the inequality $h(x_r) \geq h(x_s) + \ell(x_s, x_r)$ yields
	\[
		h(x_s) \leq h(x_r) - \ell(x_s, x_r) \leq r - (r-s) = s\text{,}
	\]
	so, altogether, $h(x_s) = s$.
\end{proof}

\begin{lemma}\label{lem:levels_nonempty}
	Let $(X,{\leq})$ be a finite poset with bottom element $\bot$, and let $n = h(X)$.
	Then the levels $X_0, \ldots, X_n$ are non-empty.
\end{lemma}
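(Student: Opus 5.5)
The plan is to combine the definition of $n$ as a maximum with Lemma~\ref{lem:chain_Xr}. Since $X$ is finite and non-empty (it contains $\bot$), the maximum $n = h(X) = \max_{x\in X} h(x)$ is attained. So we can fix an element $a \in X$ with $h(a) = n$.

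Next, we apply Lemma~\ref{lem:chain_Xr} to $a$ with $r = n$. This gives a chain
\[
	\bot = x_0 < \ldots < x_n = a
\]
of length $n$. The second part of the same lemma tells us that $h(x_s) = s$ for every $s \in \{0,\ldots,n\}$. Hence $x_s \in X_s$ for each such $s$, and every level $X_0,\ldots,X_n$ is non-empty.

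There is no real obstacle here. The only points to state explicitly are that the maximum defining $n$ is attained, which follows from finiteness and $\bot \in X$, and that the height of any element of $X$ is finite, since chains in a finite poset have bounded length. The substantive work, namely that the intermediate elements of a maximal-length chain have exactly the expected heights, is already done in Lemma~\ref{lem:chain_Xr}.
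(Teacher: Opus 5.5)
Your proof is correct and follows essentially the same route as the paper: take an element of height $n$ (which exists because the maximum defining $n$ is attained), apply Lemma~\ref{lem:chain_Xr} to obtain a chain $\bot = x_0 < \ldots < x_n$ with $h(x_s) = s$, and conclude $x_s \in X_s$. Your extra remarks about finiteness guaranteeing that the maximum is attained are implicit in the paper but harmless to state.
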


\begin{proof}
	Let $s\in\{0,\ldots,n\}$.
	Since $n$ is the largest height, there exists an element $a\in X_n$.
	By Lemma~\ref{lem:chain_Xr}, there exists a chain $\bot = x_0 < \ldots < x_n = a$ of length $n$ with $h(x_s) = s$.
	Thus $x_s\in X_s$, and therefore, $X_s\neq\emptyset$.
\end{proof}

\begin{definition}
	Let $(X,{\leq})$ be a finite poset with bottom element $\bot$.
	We define the following regularity properties.
	\begin{enumerate}[(a)]
		\customitem{(R$\theta$)}\label{regular:theta}
		For all $r\in\{0,\ldots,n\}$, the number
		\[
			\theta(r) = \#\{x\in \Omega \mid a \leq x\}
		\]
		is constant over all choices of $a\in X_r$.
		\customitem{(R$\mu$)}\label{regular:mu}
		For all $r,s\in\{0,\ldots,n\}$, the number
		\[
			\mu(r,s) = \#\{x\in X_s \mid a \leq x \leq b\}
		\]
		is constant over all choices of $(a,b)\in X_r \times \Omega$ with $a \leq b$.%
		\footnote{
			Such admissible pairs exist:
			choose $b\in\Omega$, which is non-empty since $\Omega=X_n$ and $n=h(X)$; applying Lemma~\ref{lem:chain_Xr} to $b$ yields an element $a\in X_r$ with $a\leq b$.
			Hence $\mu(r,s)$ is well-defined.
		}
		\customitem{(R$\nu$)}\label{regular:nu}
		For all $r,s\in\{0,\ldots,n\}$, the number
		\[
			\nu(r,s) = \#\{x\in X_r \mid x \leq a\}
		\]
		is constant over all choices of $a\in X_s$.
	\end{enumerate}
	A finite poset will be called \emph{regular} if it has a bottom element and satisfies all the regularity properties~\ref{regular:theta},~\ref{regular:mu}, and~\ref{regular:nu}.

	As a mnemonic, we suggest \emph{\textbf{t}op} for $\theta$ (counting upward), \emph{\textbf{m}iddle} for $\mu$ (counting in between), and \emph{\textbf{n}adir} or \emph{\textbf{n}ether} for $\nu$ (counting downward).
\end{definition}

\begin{lemma}\label{lem:omega_cofinal}
	Let $(X,{\leq})$ be a finite poset with bottom element $\bot$ and the regularity property~\ref{regular:theta}.
	Then
	\[
		\theta(r) \geq 1
	\]
	for all $r\in \{0,\ldots,n\}$.
	In other words, the top level $\Omega$ is \emph{cofinal} in $(X,{\leq})$, meaning that for every $a\in X$, there exists a top level element $x\in\Omega$ with $a \leq x$.
\end{lemma}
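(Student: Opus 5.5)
The plan is to exploit that (R$\theta$) makes $\theta(r)$ depend only on the level. It therefore suffices to show $\theta(r)\geq 1$ by exhibiting, for each $r\in\{0,\ldots,n\}$, a \emph{single} element $a\in X_r$ that lies below some element of $\Omega$. Constancy of $\theta(r)$ over $X_r$ then transfers this lower bound to every element of $X_r$.

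To find such an element, pick any $b\in\Omega$. This is possible because $\Omega=X_n$ is non-empty by Lemma~\ref{lem:levels_nonempty}, or directly since $n=h(X)$ is attained. Since $h(b)=n$, Lemma~\ref{lem:chain_Xr} provides a chain $\bot=x_0<\ldots<x_n=b$ with $h(x_r)=r$ for all $r$. Set $a\coloneqq x_r\in X_r$. Then $a\leq b$ with $b\in\Omega$, so $\#\{x\in\Omega\mid a\leq x\}\geq 1$, i.e.\ $\theta(r)\geq 1$. Here we use that $X_r$ is non-empty, which is exactly what makes $\theta(r)$ a well-defined constant rather than vacuous.

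The cofinality reformulation is then immediate. Any $a\in X$ has some height $r=h(a)\in\{0,\ldots,n\}$, so $a\in X_r$, and $\theta(r)\geq 1$ yields an $x\in\Omega$ with $a\leq x$.

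There is no real obstacle. The only point needing care is that (R$\theta$) alone gives no information about a particular element. The argument must therefore go through one witness per level, supplied by the chain from Lemma~\ref{lem:chain_Xr}, rather than trying to climb upward from an arbitrary $a$. Note that such a climb could a priori end at a maximal element of height less than $n$; it is precisely regularity that rules this out.
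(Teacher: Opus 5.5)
Your proof is correct and follows the paper's own argument essentially verbatim: pick some $b\in\Omega$, use Lemma~\ref{lem:chain_Xr} to obtain a witness in $X_r$ below $b$, and let the constancy in~\ref{regular:theta} give $\theta(r)\geq 1$, from which cofinality follows immediately.
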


\begin{proof}
	Fix $r\in\{0,\ldots,n\}$.
	Since $\Omega = X_n$ and $n = h(X)$, there exists an $x'\in\Omega$.
	Applying Lemma~\ref{lem:chain_Xr} to $x'$ yields an element $a' \in X_r$ with $a' \leq x'$.
	Hence, $\theta(r) \geq 1$.
	The cofinality statement is now a direct consequence of the regularity property~\ref{regular:theta}.
\end{proof}

For a finite regular poset $(X,{\leq})$, the regularity properties~\ref{regular:theta}, \ref{regular:nu}, and~\ref{regular:mu}, together with an existence argument analogous to the proof of Lemma~\ref{lem:omega_cofinal}, imply that for all $r,s\in\{0,\ldots,n\}$ with $r\le s$,
\[
	\nu(r,s)\ge 1
	\qquad\text{and}\qquad
	\mu(r,s)\ge 1\text{.}
\]
Moreover, the following border cases hold.
For all $r\in\{0,\ldots,n\}$,
\[
	\mu(r,r) = \mu(r,n) = \nu(0,r) = \nu(r,r) = \theta(n) = 1
	\qquad\text{and}\qquad
	\mu(0,r) = \nu(r,n)\text{,}
\]
and, in addition,
\[
	\theta(0) = \#\Omega\text{.}
\]

\begin{remark}
	A finite regular poset $(X,{\leq})$ is not necessarily graded; Figure~\ref{fig:counterex_not_graded} shows a counterexample.
\end{remark}

\begin{figure}[ht]
\centering
\begin{tikzpicture}[
  x=1.6cm, y=1.1cm,
  every node/.style={font=\small, inner sep=1pt},
  edge/.style={line width=0.4pt},
]

\node (bot) at (0,0) {$\bot$};

\node (a1) at (-1,1) {$a_1$};
\node (b1) at ( 1,1) {$b_1$};

\node (a2) at (-1,2) {$a_2$};
\node (b2) at ( 1,2) {$b_2$};

\node (a3) at (-1,3) {$a_3$};
\node (b3) at ( 1,3) {$b_3$};

\node (a4) at (-1,4) {$a_4$};
\node (b4) at ( 1,4) {$b_4$};

\draw[edge] (bot) -- (a1);
\draw[edge] (a1)  -- (a2);
\draw[edge] (a2)  -- (a3);
\draw[edge] (a3)  -- (a4);

\draw[edge] (bot) -- (b1);
\draw[edge] (b1)  -- (b2);
\draw[edge] (b2)  -- (b3);
\draw[edge] (b3)  -- (b4);

\draw[edge] (a1) -- (b3);
\draw[edge] (b1) -- (a3);
\draw[edge] (a3) -- (b4);
\draw[edge] (b3) -- (a4);

\end{tikzpicture}
\caption{A non-graded finite regular poset}\label{fig:counterex_not_graded}
\end{figure}

\begin{lemma}[{{\cite[Lem.~2.1]{Suda-2012-DM312[10]:1827-1831}}}]\label{lem:theta_double}
	Let $(X,{\leq})$ be a finite regular poset.
	Then, for all $r,s\in\{0,\ldots,n\}$, the number%
	\footnote{
		This extends the notation $\theta$ from one to two arguments.
		The previous one-variable definition is recovered as $\theta(r) = \theta(r,n)$.
	}
	\[
		\theta(r,s) \coloneqq \#\{x\in X_s \mid a \leq x\}
	\]
	is constant over all choices of $a\in X_r$.
	The exact value is
	\[
		\theta(r,s)
		= \frac{\theta(r)}{\theta(s)} \mu(r,s)\text{.}
	\]
\end{lemma}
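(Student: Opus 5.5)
Fix $a\in X_r$ and double count the set of pairs
\[
	P_a = \{(x,b)\in X_s\times\Omega \mid a\leq x\leq b\}\text{.}
\]
Counting by the first coordinate, each $x\in X_s$ with $a\leq x$ contributes exactly $\#\{b\in\Omega\mid x\leq b\} = \theta(s)$ elements. This holds by the regularity property~\ref{regular:theta}, since $x\in X_s$. Hence
\[
	\#P_a = \#\{x\in X_s\mid a\leq x\}\cdot\theta(s)\text{.}
\]
Counting by the second coordinate, each $b\in\Omega$ with $a\leq b$ contributes $\#\{x\in X_s\mid a\leq x\leq b\}$. Because $(a,b)\in X_r\times\Omega$ with $a\leq b$, the property~\ref{regular:mu} gives that this number equals $\mu(r,s)$. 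The number of such $b$ is $\theta(r)$ by~\ref{regular:theta}, so $\#P_a = \theta(r)\,\mu(r,s)$.

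Equating the two counts gives
\[
	\#\{x\in X_s\mid a\leq x\}\cdot\theta(s) = \theta(r)\,\mu(r,s)\text{.}
\]
By Lemma~\ref{lem:omega_cofinal} we have $\theta(s)\geq 1$, so we may divide by $\theta(s)$. The resulting value $\frac{\theta(r)}{\theta(s)}\mu(r,s)$ depends only on $r$ and $s$, not on the choice of $a\in X_r$. This proves both the constancy and the stated formula.

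No step is a real obstacle. The only points needing care are that $\theta(s)\neq 0$, which is exactly Lemma~\ref{lem:omega_cofinal}, and that~\ref{regular:mu} is applied only to pairs with $a\leq b$, which holds because we sum only over $b\geq a$. The argument does not need~\ref{regular:nu}. It also covers the case $r>s$ without a separate treatment: there $\mu(r,s)=0$, and $\theta(r,s)=0$ as well, since $a\leq x$ with $x\in X_s$ would force $s=h(x)\geq h(a)=r$.
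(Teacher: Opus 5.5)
Your proof is correct and is essentially the paper's own argument: double counting the pairs $(x,y)\in X_s\times\Omega$ with $a\leq x\leq y$, using \ref{regular:theta} and \ref{regular:mu}. Your explicit appeal to Lemma~\ref{lem:omega_cofinal} for $\theta(s)\geq 1$ fills in a division step that the paper leaves implicit.
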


\begin{proof}
	Let $a\in X_r$ and $\xi = \#\{x\in X_s \mid a \leq x\}$.
	We count the set $S$ of all $(x,y) \in X_s \times \Omega$ with $a \leq x \leq y$ in two ways.
	On the one hand, there are $\theta(r)$ choices for $y\in \Omega$ with $a\leq y$ and for each such $y$ there are $\mu(r,s)$ choices for $x\in X_s$ with $a \leq x \leq y$.
	So
	\[
		\#S = \theta(r)\mu(r,s)\text{.}
	\]
	On the other hand, there are $\xi$ choices for $x\in X_s$ with $a \leq x$, and for each such $x$ there are $\theta(s)$ choices for $y\in \Omega$ with $x \leq y$.
	Thus
	\[
		\#S = \xi\theta(s)\text{.}
	\]
	Equating both expressions for $\#S$, we get $\xi = \frac{\theta(r)}{\theta(s)}\mu(r,s)$, which is independent of the choice of $a\in X_r$.
\end{proof}

\begin{remark}\label{rem:truncation_not_regular}
	In the regularity property~\ref{regular:theta}, the top level $\Omega$ plays a distinguished role.
	Lemma~\ref{lem:theta_double} shows, however, that an analogous regularity property holds with any level $X_r$ in place of $\Omega$.
	The regularity property~\ref{regular:nu} is already formulated for arbitrary levels and does not single out the top level.
	It is therefore natural to ask whether the role of $\Omega$ in the regularity property~\ref{regular:mu} can likewise be replaced by an arbitrary level.

	However, this is not true, even when additionally imposing the requirement that the poset be graded.
	Figure~\ref{fig:counterex_truncation} provides an explicit counterexample.
	One checks that this poset is graded and meets all regularity properties.
	However, the $\mu(1,2)$-property fails if $X_3$ is taken in place of the top level:
	There are two elements $x\in\{c_1,c_2\}$ of height $2$ with $a \leq x \leq e$, but only one element of height $2$ with $a \leq x \leq f$, namely $x = c_3$.
\end{remark}

\begin{figure}[ht]
	\centering
	\begin{tikzpicture}[
		x=1.4cm, y=1.1cm,
		every node/.style={font=\small, inner sep=1pt},
		edge/.style={line width=0.4pt}
	]
		\node (bot) at (0,0) {$\bot$};

		\node (a) at (-1.5,1) {$a$};
		\node (b) at ( 1.5,1) {$b$};

		\node (c1) at (-3,2) {$c_1$};
		\node (c2) at (-2,2) {$c_2$};
		\node (c3) at (-1,2) {$c_3$};

		\node (d1) at ( 1,2) {$d_1$};
		\node (d2) at ( 2,2) {$d_2$};
		\node (d3) at ( 3,2) {$d_3$};

		\node (e) at (-1.5,3) {$e$};
		\node (f) at ( 1.5,3) {$f$};

		\node (top) at (0,4) {$\top$};

		\draw[edge] (bot) -- (a);
		\draw[edge] (bot) -- (b);

		\draw[edge] (a) -- (c1);
		\draw[edge] (a) -- (c2);
		\draw[edge] (a) -- (c3);

		\draw[edge] (b) -- (d1);
		\draw[edge] (b) -- (d2);
		\draw[edge] (b) -- (d3);

		\draw[edge] (c1) -- (e);
		\draw[edge] (c2) -- (e);
		\draw[edge] (d1) -- (e);

		\draw[edge] (c3) -- (f);
		\draw[edge] (d2) -- (f);
		\draw[edge] (d3) -- (f);

		\draw[edge] (e) -- (top);
		\draw[edge] (f) -- (top);

	\end{tikzpicture}

	\caption{A finite graded regular poset; $\mu$-regularity fails with $X_3$ in place of $\Omega$.}
	\label{fig:counterex_truncation}
\end{figure}

\begin{lemma}\label{lem:level_sizes}
	Let $(X,{\leq})$ be a finite regular poset.
	Then, for $r\in\{0,\ldots,n\}$, we have
	\[
		\#X_0 = 1\text{,}\qquad
		\#X_r = \theta(0,r) = \frac{\theta(0)}{\theta(r)} \mu(0,r) = \frac{\theta(0)}{\theta(r)} \nu(r,n)
		\text{,}\qquad\text{and}\qquad
		\#\Omega = \theta(0)\text{.}
	\]
\end{lemma}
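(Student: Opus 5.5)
The plan is to read each identity off the definitions and Lemma~\ref{lem:theta_double}, applied with the bottom element $\bot$ as the base point.

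First, $\#X_0 = 1$ holds because $X_0 = \{\bot\}$. Indeed, $h(a) = 0$ forces $a = \bot$: if $a \neq \bot$, then $\bot < a$ is a chain of length $1$.

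Next, for the general level $X_r$, note that every $x \in X$ satisfies $\bot \leq x$. Hence
\[
	\{x \in X_r \mid \bot \leq x\} = X_r ,
\]
and since $\bot \in X_0$, Lemma~\ref{lem:theta_double} with $(r,s)$ replaced by $(0,r)$ gives
\[
	\#X_r = \theta(0,r) = \frac{\theta(0)}{\theta(r)}\,\mu(0,r).
\]
This step uses $\theta(r) \geq 1$ from Lemma~\ref{lem:omega_cofinal}, so the quotient is defined.

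For the last expression, I will verify the border identity $\mu(0,r) = \nu(r,n)$ directly rather than just quote it. Fix any $b \in \Omega$, which exists by Lemma~\ref{lem:levels_nonempty}. The pair $(\bot,b)$ is admissible for~\ref{regular:mu}, so $\mu(0,r)$ equals
\[
	\#\{x \in X_r \mid \bot \leq x \leq b\} = \#\{x \in X_r \mid x \leq b\}.
\]
By~\ref{regular:nu} applied with $a = b \in X_n$, this count is $\nu(r,n)$.

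Finally, the case $r = n$ gives
\[
	\#\Omega = \frac{\theta(0)}{\theta(n)}\,\mu(0,n),
\]
and both $\theta(n)$ and $\mu(0,n)$ equal $1$. For the first, $\theta(n) = \#\{x \in \Omega \mid a \leq x\}$ with $a \in \Omega$ is $1$, because $\Omega$ is an antichain. For the second, $\mu(0,n)$ counts the $x \in X_n$ with $x \leq b$ for a fixed $b \in \Omega$, and again only $x = b$ qualifies. Alternatively, one can see directly that $\theta(0) = \#\{x \in \Omega \mid \bot \leq x\} = \#\Omega$.

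There is no real obstacle here. The only point needing care is the existence of the admissible pair $(\bot, b)$ and the non-vanishing of $\theta(r)$, both of which are supplied by the earlier lemmas.
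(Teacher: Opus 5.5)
Your proof is correct and follows essentially the paper's argument: $X_0=\{\bot\}$, $\#\Omega=\theta(0)$ directly from the definition, and $\#X_r=\theta(0,r)=\frac{\theta(0)}{\theta(r)}\mu(0,r)$ via Lemma~\ref{lem:theta_double} with $a=\bot$. The only addition is that you explicitly verify the border identity $\mu(0,r)=\nu(r,n)$ using the admissible pair $(\bot,b)$, which the paper merely lists among the border cases stated before the lemma.
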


\begin{proof}
	Since $X_0=\{\bot\}$, we have $\#X_0=1$, and $\#\Omega=\theta(0)$ by definition.
	Moreover, $\#X_r=\theta(0,r)$ follows from Lemma~\ref{lem:theta_double} with $a=\bot$.
\end{proof}

We extend $\nu$ to the domain $\{0,\ldots,n\}^2$ by setting $\nu(r,s)=0$ whenever $r>s$.
The matrix $(\nu) = (\nu(r,s))_{r,s\in\{0,\ldots,n\}}$ is upper triangular with ones on the diagonal.
Hence, it is invertible, and its inverse $(\nu'(r,s))_{r,s\in\{0,\ldots,n\}} = (\nu)^{-1}$ is again upper triangular with ones on the diagonal.
Moreover, since the $\nu$-matrix has determinant $1$, all entries $\nu'(r,s)$ of its inverse are integers.
The matrices $(\nu)$ and $(\nu')$ lead to the following principle of $\nu$-inversion.

\begin{lemma}[$\nu$-inversion]\label{lem:nu_inversion}
	Let $r\in\{0,\ldots,n\}$ and $\vek{v}, \vek{w} \in \C^{r+1}$.
	Then
	\[
                \forall i\in\{0,\ldots,r\} : \sum_{j=i}^r\nu(i,j) v_j = w_i
                \iff
                \forall j\in\{0,\ldots,r\} : \sum_{i=j}^r\nu'(j,i) w_i = v_j\text{.}
        \]
\end{lemma}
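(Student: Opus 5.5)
The plan is to reduce the statement to the matrix identity $(\nu)(\nu') = (\nu')(\nu) = I$, restricted to the index range $\{0,\ldots,r\}$. The key observation is that, because both $(\nu)$ and $(\nu')$ are upper triangular, the leading $(r+1)\times(r+1)$ principal submatrices behave well. More precisely, let $N_r = (\nu(i,j))_{i,j\in\{0,\ldots,r\}}$ and $N'_r = (\nu'(i,j))_{i,j\in\{0,\ldots,r\}}$. For $i,k\le r$ we have
\[
	\sum_{j=0}^n \nu(i,j)\nu'(j,k) = \sum_{j=i}^{k}\nu(i,j)\nu'(j,k)\text{,}
\]
since $\nu(i,j)=0$ for $j<i$ and $\nu'(j,k)=0$ for $j>k$. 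All indices occurring in this sum lie in $\{0,\ldots,r\}$. Hence $N_r N'_r$ is the corresponding principal block of $(\nu)(\nu')=I$, so $N_r N'_r = I_{r+1}$. By the same argument, $N'_r N_r = I_{r+1}$. In particular, $N'_r = N_r^{-1}$.

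Next, I will rewrite both sides of the equivalence in matrix form, taking $\vek v$ and $\vek w$ as column vectors indexed by $0,\ldots,r$. The left-hand condition says $\sum_{j=0}^r \nu(i,j)v_j = w_i$ for all $i$. Here the lower summation limit $j=i$ may be replaced by $0$, again because $\nu(i,j)=0$ for $j<i$. So the left-hand condition is exactly $N_r\vek v = \vek w$.

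Likewise, the right-hand condition reads $\sum_{i=0}^r \nu'(j,i)w_i = v_j$, using that $\nu'(j,i)=0$ for $i<j$. This is exactly $N'_r\vek w = \vek v$.

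The equivalence now follows directly. If $N_r\vek v=\vek w$, multiplying by $N'_r$ gives $\vek v = N'_rN_r\vek v = N'_r\vek w$. Conversely, if $N'_r\vek w=\vek v$, multiplying by $N_r$ gives $\vek w = N_rN'_r\vek w = N_r\vek v$.

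There is no real obstacle in this argument. The only point needing care is the first step: checking that truncating to indices $\le r$ is compatible with inversion. This is precisely where the upper triangularity of both matrices is used; for a general matrix, the inverse of a principal submatrix need not be the corresponding submatrix of the inverse.
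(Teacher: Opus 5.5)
Your proof is correct and follows the same route as the paper. Both arguments pass to the leading $(r+1)\times(r+1)$ principal submatrix of $(\nu)$, use upper triangularity to identify its inverse with the corresponding submatrix of $(\nu')$, and reduce the equivalence to $N_r v = w \iff v = N_r^{-1} w$. You additionally write out the index check showing that truncation commutes with inversion, which the paper only asserts.
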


\begin{proof}
	Let $A$ be the top left $(r+1)\times (r+1)$ submatrix of $(\nu)$.
	By the triangular form of $(\nu)$, the matrix $A$ is invertible, and its inverse $A^{-1}$ is the top left $(r+1)\times (r+1)$ submatrix of $(\nu')$.
	Now the statement reduces to $A \vek{v} = \vek{w} \iff \vek{v} = A^{-1} \vek{w}$.
\end{proof}

The regular poset setting of this section has its origin in the article~\cite{Delsarte-1976-JCTSA20[2]:230-243}, where the goal was to obtain an association scheme on the top level $\Omega$.
To this end, an additional regularity property is needed.
For completeness, we give a brief outline of that theory, even though our article does not require an association scheme structure.

\begin{definition}\label{def:delsarte_regular}
	Let $(X,\leq)$ be a finite $\wedge$-semilattice with bottom element $\bot$ and let $n = h(X)$.
	We define the following regularity property.
	\begin{enumerate}
		\customitem{(R$\pi$)}\label{regular:pi}
		For all $r,s,t\in\{0,\ldots,n\}$ for which there exists a pair $(a,b)\in (X_s \times \Omega)$ with $a \wedge b \in X_r$, the number
		\[
			\pi(r,s,t) = \#\{(x,y) \in X_t \times \Omega \mid a \leq y\text{ and }x\leq y\text{ and }x \leq b\}
		\]
		is constant over all choices of $(a,b)\in (X_s \times \Omega)$ with $a \wedge b \in X_r$.
	\end{enumerate}
	A finite poset $(X,\leq)$ is called \emph{Delsarte-regular} if it is a regular $\wedge$-semilattice satisfying, in addition, the regularity property~\ref{regular:pi}.
\end{definition}

\begin{fact}[{{\cite[Th.~7]{Delsarte-1976-JCTSA20[2]:230-243}}}]\label{fct:pi_implies_ass_scheme}
Let $(X,{\leq})$ be a Delsarte-regular finite $\wedge$-semilattice of height $n = h(X)$.
Then the set $\mathcal{R}$ of all non-empty relations%
\footnote{
	The reverse order induced by the term $n-i$ matches the metric structure of many association schemes obtained this way.
}
\[
	x \mathrel{R_i} y \iff h(x\wedge y) = n-i
	\qquad (i\in\{0,\ldots,n\})
\]
is a symmetric association scheme on $\Omega$.
\end{fact}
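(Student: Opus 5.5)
The plan is to verify the association scheme axioms directly: the relations partition $\Omega\times\Omega$, $R_0$ is the identity, each $R_i$ is symmetric, and the adjacency matrices span an algebra closed under multiplication. For closure I would write everything in terms of incidence matrices between levels, and use \ref{regular:pi} and \ref{regular:mu} to rewrite products.

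\emph{Routine parts.} Each pair $x,y\in\Omega$ has a meet $x\wedge y$ with $h(x\wedge y)\in\{0,\ldots,n\}$, so the relations $R_i$ partition $\Omega\times\Omega$. Symmetry holds because $x\wedge y=y\wedge x$. For $R_0$: if $h(x\wedge y)=n$ and $x\wedge y<x$, then appending $x$ to a chain of length $n$ ending at $x\wedge y$ (Lemma~\ref{lem:chain_Xr}) would give $h(x)>n$, which is impossible. Hence $x\wedge y=x$, and likewise $x\wedge y=y$, so $R_0$ is the diagonal. Let $A_i$ denote the adjacency matrix of $R_i$. The non-zero $A_i$ have disjoint non-empty supports and are therefore linearly independent; the empty relations are simply discarded.

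\emph{Notation.} For levels $k,s$ let $N_{k,s}$ be the $X_k\times X_s$ incidence matrix of $\leq$, and put $W_s=N_{s,n}$. Let $D_t=W_t^\top W_t$. The common lower bounds of $x,y\in\Omega$ are exactly the elements below $x\wedge y$, so
\[
D_t(x,y)=\nu\bigl(t,h(x\wedge y)\bigr),\qquad\text{hence}\qquad D_t=\sum_i \nu(t,n-i)A_i .
\]
By $\nu$-inversion (Lemma~\ref{lem:nu_inversion}), $\operatorname{span}\{A_i\}=\operatorname{span}\{D_t\}$. It therefore suffices to show that $D_sD_t$ lies in $\operatorname{span}\{D_k\}$.

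\emph{Closure, the main step.} First, the entry $(a,b)$ of $W_sD_t$, for $a\in X_s$ and $b\in\Omega$, is exactly the count in \ref{regular:pi}. Writing $C_{s,r}$ for the $X_s\times\Omega$ indicator matrix of $h(a\wedge b)=r$, this gives
\[
W_sD_t=\sum_r \pi(r,s,t)\,C_{s,r}.
\]
Second, by the same meet argument,
\[
\bigl(N_{k,s}^\top W_k\bigr)(a,b)=\#\{z\in X_k\mid z\leq a,\ z\leq b\}=\sum_r \nu(k,r)\,C_{s,r},
\]
so $\nu$-inversion places each $C_{s,r}$ in $\operatorname{span}_k\{N_{k,s}^\top W_k\}$. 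Third, by \ref{regular:mu},
\[
\bigl(N_{k,s}W_s\bigr)(z,w)=\#\{u\in X_s\mid z\leq u\leq w\}=\mu(k,s)\,W_k(z,w).
\]
Combining the three facts,
\[
D_sD_t=W_s^\top(W_sD_t)\in\operatorname{span}_k\bigl\{(N_{k,s}W_s)^\top W_k\bigr\}=\operatorname{span}_k\bigl\{\mu(k,s)\,D_k\bigr\},
\]
which is what closure requires. The delicate points are the bookkeeping of which triples $(r,s,t)$ are actually realised, and keeping the $\nu$-inversions restricted to $r\leq s$; I expect this to be the main obstacle.

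\emph{Commutativity.} The algebra spanned by the $A_i$ consists of symmetric matrices and is closed under products. Hence for $M,M'$ in it, $MM'$ is symmetric, and so $MM'=(MM')^\top=M'M$. This yields a commutative symmetric association scheme.
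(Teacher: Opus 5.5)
Your proof is correct and complete. The paper gives no argument of its own here: the statement is quoted as a Fact from Delsarte's Theorem~7. Your proposal is therefore a self-contained replacement for the citation rather than an alternative to an internal proof.

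What it buys is that it is checked directly against the axioms as the paper formulates them. You use \ref{regular:pi} exactly as stated in Definition~\ref{def:delsarte_regular}, as a count of pairs $(x,y)$, which is literally the entry $(W_sD_t)(a,b)$. Beyond that you need only \ref{regular:nu}, \ref{regular:mu} and the meet structure; neither gradedness nor \ref{regular:theta} is used. The paper's setting deliberately departs from Delsarte's original hypotheses (for instance, it does not assume $X$ to be graded), so your argument settles the translation step that the bare citation leaves to the reader.

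As a by-product you get explicit structure constants for the Gram matrices $D_t=W_t^\top W_t$:
\[
	D_sD_t=\sum_{k=0}^{s}\Bigl(\sum_{r=0}^{k}\pi(r,s,t)\,\nu'(r,k)\Bigr)\mu(k,s)\,D_k\text{.}
\]
From this, the intersection numbers of the scheme follow by $\nu$-inversion, without any eigenvalue computations. The citation, in turn, buys brevity and access to the rest of Delsarte's theory (eigenvalues, dual distributions), which the paper does not need.

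The two points you flag as delicate are harmless.
\begin{itemize}
\item If no pair $(a,b)\in X_s\times\Omega$ satisfies $a\wedge b\in X_r$, then $C_{s,r}=0$. So the undefined value $\pi(r,s,t)$ can simply be set to $0$ in $W_sD_t=\sum_r\pi(r,s,t)\,C_{s,r}$.
\item Since $h(a\wedge b)\le h(a)$, we have $C_{s,r}=0$ for $r>s$. Hence the relations $N_{k,s}^\top W_k=\sum_{r=k}^{s}\nu(k,r)\,C_{s,r}$ for $k\in\{0,\ldots,s\}$ are exactly the situation of Lemma~\ref{lem:nu_inversion} with $s$ in place of $r$. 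This gives $C_{s,r}=\sum_{k=r}^{s}\nu'(r,k)\,N_{k,s}^\top W_k$ for all $r\le s$.
\end{itemize}
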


\begin{remark}\label{rem:short}
	A finite regular $\wedge$-semilattice $(X,{\leq})$ is called \emph{short} when each $a\in X$ has the form $a = x \wedge y$ with $x,y\in \Omega$.
	If $(X,{\leq})$ is a short Delsarte-regular semilattice, then all relations $R_i$ defined above are non-empty, and hence the association scheme $\mathcal{R} = \{R_0,\ldots,R_n\}$ produced by Fact~\ref{fct:pi_implies_ass_scheme} has $n$ classes.
\end{remark}

\section{The classical meet-semilattices in combinatorics}\label{sec:classical_lattices}

In \cite{Delsarte-1976-JCTSA20[2]:230-243}, Delsarte considered four prominent families of regular $\wedge$-semilattices, which may be regarded as the \enquote{big four} semilattices of algebraic combinatorics.
They come in two pairs, each consisting of a classical family based on the subset lattice of a base set $V$, together with its $q$-analog, obtained by replacing the subset lattice by the lattice of subspaces of an ambient vector space $V$ over~$\F_q$ and interpreting the same lattice-theoretic notions in this new setting.%
\footnote{
        Subsets of $V$ are replaced by subspaces, their unions by sums of subspaces (each the join in the corresponding lattice), their cardinalities by dimensions (the heights), and so on.
}
The first pair, of \emph{triangular type} (\emph{Type~I} in Delsarte's terminology), consists of the semilattices underlying the Johnson and $q$-Johnson schemes.
The second pair, of \emph{hypercubic type} (\emph{Type~II}), consists of the semilattices underlying the Hamming and $q$-Hamming schemes.
For many counting formulas, the classical family and its $q$-analog can be treated simultaneously:
the formula for the classical case is obtained as the formal specialization $q=1$ of the corresponding $q$-analog formula.%
\footnote{
	One reason is that, for fixed integers $a\geq 0$ and $b$, the Gaussian binomial coefficient $\qbinom{a}{b}{q}$ is a polynomial in $\Z[q]$.
	Its evaluation at $q=1$ recovers the ordinary binomial coefficient, i.\,e., $\qbinom{a}{b}{1} = \binom{a}{b}$.
}

As outlined in~\cite{Delsarte-1976-JCTSA20[2]:230-243}, all these semilattices are graded and Delsarte-regular, and their maximum height is the parameter $n$.
These families are more commonly referred to by the names of the association schemes on their top levels, arising from Fact~\ref{fct:pi_implies_ass_scheme}, such as the Johnson scheme or the Hamming scheme.
However, our theory does not require an association scheme on $\Omega$; indeed, the crucial structure is the underlying regular semilattice.
We therefore use the corresponding semilattice terminology, speaking of the \emph{Johnson semilattice}, \emph{Hamming semilattice}, and so on.
For each family, we also indicate when the semilattice is short, since then the resulting association scheme is non-degenerate in the sense of Remark~\ref{rem:short}.

Further families of regular $\wedge$-semilattices can be found in \cite[Sec.~2]{Suda-2012-DM312[10]:1827-1831} and \cite[Sec.~3]{Guo-Ma-Wang-2013-SChinaM56[11]:2393-2407}.

\subsection{Triangular type: Johnson and $q$-Johnson semilattices}

Let $V$ be a finite set of size $v$ and fix an integer $n\in\{0,\ldots,v\}$.
The \emph{Johnson semilattice} $J(v,n)$ consists of the set $X$ of all subsets of $V$ of size at most $n$, ordered by set inclusion.
In other words, it is the subset lattice of $V$, truncated at cardinality $n$.
For $A\in X$, we have $h(A) = \#A$.

In the corresponding $q$-analog setting, let $V$ be an $\F_q$-vector space of finite dimension $v$ and fix an integer $n\in \{0,\ldots,v\}$.
The \emph{$q$-Johnson semilattice} $J_q(v,n)$ consists of the set $X$ of all subspaces of $V$ of dimension at most $n$, ordered by inclusion.
In other words, it is the subspace lattice of $V$, truncated at dimension $n$.
For $A\in X$, we have $h(A) = \dim(A)$.

With the ordinary case corresponding to $q=1$, as discussed above, the regularity parameters of the semilattices $J(v,n)$ and $J_q(v,n)$ are
\begin{align*}
	\theta(r,s) & = \qbinom{v-r}{s-r}{q}\text{,} &
	\mu(r,s) & = \qbinom{n-r}{s-r}{q}\text{,} &
	\nu(r,s) & = \qbinom{s}{r}{q}\text{.} &
\end{align*}

The Johnson and $q$-Johnson semilattices are short if and only if $n \leq \frac{v}{2}$.

\subsection{Hypercubic type: Hamming and $q$-Hamming semilattices}\label{subsec:classical_lattices:hypercubic}
Let $N$ and $M$ be finite sets of sizes $n$ and $m \geq 1$, respectively.
The \emph{Hamming semilattice} $H(m,n)$ is the poset of all partial maps $N \to M$.
More explicitly, $X$ consists of all maps $f : W_f \to M$ with $W_f \subseteq N$, and for $f,g\in X$ we have $f \leq g$ if and only if $W_f \subseteq W_g$ and $g|_{W_f} = f$.
The height function of this semilattice is $h(f) = \#W_f$.
Moreover, the meet $f \wedge g$ has domain
\[
	W_{f\wedge g} = \{x\in W_f \cap W_g \mid f(x) = g(x)\}
\]
and is given by
\[
	(f\wedge g)(x) = f(x) = g(x)\text{.}
\]
Equivalently, one may adjoin a new symbol $\ast$ to $M$, with the meaning \enquote{not defined}, and identify the elements $f\in X$ with maps $N \to M \cup \{{\ast}\}$ by setting $f(x) = {\ast}$ for all $x\in N \setminus W_f$.%
\footnote{
	We prefer the description in terms of variable domains $W_f$, since it is more natural for forming the $q$-analog.
}

The top level $\Omega$ consists of all maps $f : N \to M$.
It can be identified with the usual Hamming space in coding theory as follows:
the set $M$ is regarded as an alphabet and $N$ as the set of positions, typically $N = \{1,\ldots,n\}$, so that maps $N \to M$ are the same as words in $M^n$.
Moreover, the words corresponding to two maps $f, g : N \to M$ are at Hamming distance $d$ if and only if $h(f \wedge g) = n-d$.

In the corresponding $q$-analog setting, let $N$ and $M$ be finite $\F_q$-vector spaces of dimensions $n$ and $m$, respectively.
The \emph{$q$-Hamming semilattice} $H_q(m,n)$ is the poset of all partial linear maps $N \to M$.
More explicitly, $X$ consists of all linear maps $f : W_f \to M$ with a subspace $W_f \subseteq N$, with the order and meet as above.
The height function of this semilattice is $h(f) = \dim(W_f)$.
The top level $\Omega$ consists of all linear maps $f : N \to M$.
This corresponds to the metric space underlying $q$-ary $m\times n$ rank-metric codes:
after fixing bases of $N$ and $M$, $\Omega$ can be identified with the set of all $m \times n$ matrices over $\F_q$.
Furthermore, the matrices corresponding to two linear maps $f, g : N \to M$ are at rank distance $d$, i.\,e., $\rk(f - g) = d$, if and only if $h(f \wedge g) = n-d$.%
\footnote{This follows from $W_{f\wedge g} = \ker(f-g)$ and the rank-nullity theorem.}

The regularity parameters of the semilattices $H(m,n)$ and $H_q(m,n)$ are
\begin{align*}
	\theta(r,s) & = (\#M)^{s-r}\qbinom{n-r}{s-r}{q}\text{,} &
	\mu(r,s) & = \qbinom{n-r}{s-r}{q}\text{,} &
	\nu(r,s) & = \qbinom{s}{r}{q}\text{.} &
\end{align*}

The coincidence of the formulas for $\mu$ and $\nu$ in the Johnson and Hamming cases, and likewise their $q$-analogs, reflects the fact that both parameters count elements below a fixed map $f$, possibly subject to further restrictions.
Hence, the values of the maps being counted are already forced, and only the underlying domains $W_g$ have to be chosen.
Consequently, when viewed downward from a fixed element, the Hamming and $q$-Hamming semilattices look just like their Johnson counterparts.
The difference appears only when extending upward, where the freedom of choosing new values produces the extra factor $(\#M)^{s-r}$ in $\theta(r,s)$.

For the ordinary Hamming semilattice with $n > 0$, the requirement $m \geq 1$ is necessary for the intended maximum height to be $n$.%
\footnote{
	If $m=0$ and $n > 0$, there are no total maps $N \to M$.
}
The Hamming semilattice is short precisely for $m \geq 2$, together with the exceptional case $(n,m) = (0,1)$.
The $q$-Hamming semilattice is short if and only if $m \geq n$.

\begin{remark}
	The formula $\nu(r,s) = \qbinom{s}{r}{q}$ applies to all four discussed semilattices.
	In~\cite[p.~470]{Kiermaier-Pavcevic-2015-JCD23[11]:463-480}, the matrix $(\nu)$ is called the upper triangular $q$-Pascal matrix, and the entries of its inverse are computed as
	\[
		\nu'(r,s) = (-1)^{s-r} q^{\binom{s-r}{2}} \qbinom{s}{r}{q}\text{.}
	\]
\end{remark}

\section{Designs in regular posets}\label{sec:poset_design}
We will develop the theory of designs for general complex-valued functions $\beta:\Omega\to\mathbb{C}$, which can be understood as attaching weights to the elements of $\Omega$.
The classical \emph{set case}, where one works with a subset $A\subseteq \Omega$ rather than a weight function, is included as the special case $\beta=\chi_A$, where $\chi_A : \Omega \to\{0,1\}$ denotes the characteristic function of $A$.
To simplify notation, we will freely identify a subset $A \subseteq \Omega$ with its characteristic function $\chi_A$.

In what follows, $\C^{\Omega}$ is equipped with the standard Hermitian inner product $(\beta, \gamma) \mapsto \langle \beta, \gamma\rangle \coloneqq \beta^{\ast} \gamma$, where $\beta^{\ast}$ is the conjugate transpose of $\beta$.
As usual, we write $\norm{\beta} = \sqrt{\beta^{\ast} \beta}$, and we use the abbreviation
\[
	\#\beta = \sum_{x\in \Omega} \beta(x) = \vek{1}_{\Omega}^{\ast} \beta = \overline{\beta^{\ast} \vek{1}_{\Omega}}\text{,}
\]
with the idea that in the set case $A \subseteq \Omega$, the size of $A$ equals $\#\chi_A$.

\begin{definition}
	Let $\beta \in \C^\Omega$.
	For $a\in X$, we define
	\[
		\lambda^{(\beta)}(a) = \lambda(a) = \sum_{\substack{x\in \Omega \\ a \leq x}} \beta(x)\text{.}
	\]
	By the identification of subsets $A \subseteq \Omega$ with their characteristic functions $\chi_A$, we use the notation
	\[
		\lambda^{(A)}(a) = \lambda^{(\chi_A)}(a)\text{.}
	\]
\end{definition}

Clearly, $\lambda^{(\beta)}(\bot) = \#\beta$.

\begin{definition}\label{def:poset_design}
	Let $\beta \in \C^\Omega$ and $t\in\{0,\ldots,n\}$.
	If $\lambda \coloneqq \lambda^{(\beta)}(a)$ is constant over all choices of $a\in X_t$, the function $\beta$ is called a \emph{design} of \emph{strength} $t$ or, in short, a \emph{$t$-design}.
	The parameter $\lambda$ is called the \emph{index} of the $t$-design $\beta$.

	A subset $D\subseteq \Omega$ is identified with its characteristic function $\chi_D$.
	Accordingly, $D$ is a $t$-\emph{design} if $\chi_D$ is a $t$-design in the above sense.
\end{definition}

\begin{example}
	For every $t\in\{0,\ldots,n\}$, the top level $\Omega$ is a $t$-design of index $\lambda = \theta(t)$.
	It is called the \emph{complete} $t$-design; its index is often denoted as $\lambda_{\max}$ in the literature, since in the set case, it is the maximum possible index for a $t$-design.
\end{example}

\begin{fact}[{{\cite[Prop.~2.4]{Suda-2012-DM312[10]:1827-1831}}}]\label{fct:general:lambda_i}
	Let $\beta\in \C^{\Omega}$ be a $t$-design of index $\lambda$.
	Then $\beta$ is an $s$-design for all $s\in\{0,\ldots,t\}$.
	The associated index is
	\[
		\lambda_s = \lambda \frac{\theta(s)}{\theta(t)}\text{.}
	\]
	In particular,
	\[
		\#\beta = \lambda_0 = \lambda \frac{\theta(0)}{\theta(t)}\text{.}
	\]
\end{fact}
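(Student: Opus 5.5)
We fix $s\in\{0,\ldots,t\}$ and $b\in X_s$, and we show that $\lambda^{(\beta)}(b)$ does not depend on the choice of $b$. The argument is a weighted double count, in the spirit of the proof of Lemma~\ref{lem:theta_double}. Consider the sum
\[
	\Sigma \coloneqq \sum_{\substack{(a,x)\in X_t\times\Omega\\ b\leq a\leq x}} \beta(x)\text{,}
\]
which we evaluate in two ways.

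First we sum over $a$. By Lemma~\ref{lem:theta_double}, there are exactly $\theta(s,t)$ elements $a\in X_t$ with $b\leq a$. For each such $a$, the inner sum over $x\in\Omega$ with $a\leq x$ equals $\lambda^{(\beta)}(a)=\lambda$, because $\beta$ is a $t$-design. Hence $\Sigma=\theta(s,t)\lambda$.

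Next we sum over $x$. For fixed $x\in\Omega$ with $b\leq x$, the pair $(b,x)$ lies in $X_s\times\Omega$ with $b\leq x$. By~\ref{regular:mu}, the number of $a\in X_t$ with $b\leq a\leq x$ is therefore $\mu(s,t)$. This gives $\Sigma=\mu(s,t)\,\lambda^{(\beta)}(b)$.

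Since $s\leq t$, the remark following Lemma~\ref{lem:omega_cofinal} gives $\mu(s,t)\geq 1$, so we may divide by it. Using the formula $\theta(s,t)=\frac{\theta(s)}{\theta(t)}\mu(s,t)$ from Lemma~\ref{lem:theta_double}, we obtain
\[
	\lambda^{(\beta)}(b)=\frac{\theta(s,t)}{\mu(s,t)}\lambda=\lambda\frac{\theta(s)}{\theta(t)}\text{.}
\]
This value is independent of $b$, so $\beta$ is an $s$-design with the stated index. Division by $\theta(t)$ is also allowed, since $\theta(t)\geq 1$ by Lemma~\ref{lem:omega_cofinal}.

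For the last claim we take $s=0$. Then $X_0=\{\bot\}$ and $\lambda^{(\beta)}(\bot)=\#\beta$, which yields $\#\beta=\lambda_0=\lambda\frac{\theta(0)}{\theta(t)}$.

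The only step that needs care is applying~\ref{regular:mu}. It requires the outer element $x$ to lie in $\Omega$, and this is exactly how $x$ is chosen in $\Sigma$, so no regularity beyond~\ref{regular:mu}, \ref{regular:theta} and Lemma~\ref{lem:theta_double} is needed.
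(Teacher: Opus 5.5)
Your proposal is correct and is essentially the paper's proof: the same weighted double count of $\beta(x)$ over pairs $b\leq a\leq x$ with $a\in X_t$ and $x\in\Omega$ gives $\lambda\,\theta(s,t)=\mu(s,t)\,\lambda^{(\beta)}(b)$, and Lemma~\ref{lem:theta_double} then converts $\theta(s,t)/\mu(s,t)$ into $\theta(s)/\theta(t)$. Your explicit checks that $\mu(s,t)\geq 1$ and $\theta(t)\geq 1$, which justify the divisions, are a small addition that the paper leaves implicit.
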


\begin{proof}
	Let $a\in X_s$.
	Then
	\begin{multline*}
		\lambda \theta(s,t)
		= \sum_{\substack{x\in X_t \\ a \leq x}} \lambda
		= \sum_{\substack{x\in X_t \\ a \leq x}} \sum_{\substack{y\in \Omega \\ x \leq y}} \beta(y)
		= \sum_{\substack{y\in \Omega \\ a \leq y}} \beta(y) \sum_{\substack{x\in X_t\\a \leq x \leq y}} 1 \\
		= \sum_{\substack{y\in \Omega \\ a \leq y}} \beta(y) \mu(s,t)
		= \mu(s,t) \sum_{\substack{y\in \Omega \\ a \leq y}} \beta(y)
		= \mu(s,t) \lambda(a)\text{.}
	\end{multline*}
	Hence
	\[
		\lambda(a)
		= \lambda \frac{\theta(s,t)}{\mu(s,t)}
		= \lambda \frac{\theta(s)}{\theta(t)}\text{,}
	\]
	where we used Lemma~\ref{lem:theta_double} in the last step.
\end{proof}

\begin{remark}
The statement of Fact~\ref{fct:general:lambda_i} is first found in~\cite[p.~240, Remark]{Delsarte-1976-JCTSA20[2]:230-243}, where it was shown in an algebraic way, based on the dual distribution of $\beta$, and hence needs the stronger assumption of Delsarte-regularity.
The above combinatorial proof is essentially the one from \cite[Prop.~2.4]{Suda-2012-DM312[10]:1827-1831}.
It generalizes the double-counting argument from ordinary design theory, and can also be understood as a streamlined version of the proof of~\cite[Th.~4.33]{Bannai-Bannai-Ito-Tanaka-2021-AlgebraicCombinatorics}.
\end{remark}

\begin{theorem}\label{thm:divisibility}
	Let $\beta\in\Z^{\Omega}$ be an integer-valued $t$-design of index $\lambda$.
	Then the following equivalent statements hold:
	\begin{enumerate}[(a)]
		\item\label{thm:divisibility:integrality}
		\textit{Integrality conditions.}\nobreak\hspace{1em}\ignorespaces
		For all $s\in\{0,\ldots,t\}$,
		\[
			\lambda_s \in \Z\text{.}
		\]
		\item\label{thm:divisibility:vanilla}
		\textit{Divisibility conditions.}\nobreak\hspace{1em}\ignorespaces
		For all $s\in\{0,\ldots,t\}$,
		\[
			\theta(t) \mid \lambda \theta(s)\text{.}
		\]
		\item\label{thm:divisibility:combined}
		\[
			\frac{\theta(t)}{\gcd(\theta(0),\ldots,\theta(t))} \mid \lambda\text{.}
		\]
	\end{enumerate}
\end{theorem}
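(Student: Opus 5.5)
We first establish (a), then show that (a), (b) and (c) are equivalent as statements about the integers $\theta(0),\ldots,\theta(t)$ and $\lambda$.

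For (a), let $s\in\{0,\ldots,t\}$. By Fact~\ref{fct:general:lambda_i}, $\beta$ is an $s$-design with index $\lambda_s$. By definition, $\lambda_s=\lambda^{(\beta)}(a)$ for any $a\in X_s$; such an $a$ exists by Lemma~\ref{lem:levels_nonempty}. So $\lambda_s$ is a finite sum of the integers $\beta(x)$ with $x\in\Omega$ and $a\le x$, and hence $\lambda_s\in\Z$.

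The equivalence of (a) and (b) is immediate from $\lambda_s=\lambda\theta(s)/\theta(t)$. Here $\theta(t)\ge 1$ by Lemma~\ref{lem:omega_cofinal}, and $\lambda=\lambda_t$ is an integer. Thus $\lambda_s\in\Z$ holds exactly when $\theta(t)\mid\lambda\theta(s)$.

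The only real content is the equivalence of (b) and (c), which is elementary number theory. Write $g=\gcd(\theta(0),\ldots,\theta(t))$; note that $g$ divides $\theta(t)$.

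For (c)$\Rightarrow$(b): from $\frac{\theta(t)}{g}\mid\lambda$ and $g\mid\theta(s)$, we get $\theta(t)=\frac{\theta(t)}{g}\cdot g\mid\lambda\theta(s)$.

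For (b)$\Rightarrow$(c): by Bézout, choose integers $c_s$ with $\sum_{s=0}^t c_s\theta(s)=g$. Then $\lambda g=\sum_s c_s\lambda\theta(s)$. By (b), every summand is divisible by $\theta(t)$, so $\theta(t)\mid\lambda g$. Dividing by $g$ gives $\frac{\theta(t)}{g}\mid\lambda$.

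The only subtlety is that the gcd must be well defined and positive, and the divisions must be legitimate. This holds because all $\theta(s)\ge1$ by Lemma~\ref{lem:omega_cofinal}. Since all three statements follow from (a) together with these equivalences, each of them holds.
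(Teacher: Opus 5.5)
Your proof is correct and follows the same route as the paper. You get (a) from writing $\lambda_s$ as a sum of integer values $\beta(x)$ over the $x\in\Omega$ above a fixed $a\in X_s$, and (a)$\Leftrightarrow$(b) from $\lambda_s=\lambda\theta(s)/\theta(t)$. For (b)$\Leftrightarrow$(c), which the paper leaves as elementary number theory, you spell out the Bézout argument and check positivity of the $\theta(s)$ and existence of $a\in X_s$.
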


\begin{proof}
	Let $s\in\{0,\ldots,t\}$.
	By Fact~\ref{fct:general:lambda_i}, $\beta$ is an $s$-design of index $\lambda_s$.
	Fixing some $a\in X_s$,
	\[
		\lambda_s
		= \sum_{\substack{x\in \Omega\\a \leq x}}\beta(x)\in\Z\text{,}
	\]
	so statement~\ref{thm:divisibility:integrality} holds.

	The equivalence of~\ref{thm:divisibility:integrality} and~\ref{thm:divisibility:vanilla} follows from the expression (see Fact~\ref{fct:general:lambda_i}) $\lambda_s = \lambda \frac{\theta(s)}{\theta(t)}$.
	The equivalence of~\ref{thm:divisibility:vanilla} and~\ref{thm:divisibility:combined} is elementary number theory.
\end{proof}

\begin{example}[Designs in the four classical families of semilattices]\label{ex:designs_classical_semilattices}\thmheadernewline
	In the Johnson semilattice $J(v,n)$, designs of strength $t$ and index $\lambda$ are precisely combinatorial $t$-$(v,n,\lambda)$ block designs without repeated blocks; in the $q$-Johnson semilattice, they are precisely $t$-$(v,n,\lambda)_q$ subspace designs without repeated blocks.
	Standard references for the theory of block designs include \cite[Ch.~19]{VanLint-Wilson-2001-Combinatorics} and the two-volume treatise \cite{Beth-Jungnickel-Lenz-1999-DesignTheoryI,Beth-Jungnickel-Lenz-1999-DesignTheoryII}; for subspace designs, see \cite{Braun-Kiermaier-Wassermann-2018-SignalsCommunTechnol:171-211}.
	For combinatorial designs, Fact~\ref{fct:general:lambda_i} and Parts~\ref{thm:divisibility:integrality} and~\ref{thm:divisibility:vanilla} of Theorem~\ref{thm:divisibility} are classical; for subspace designs, see~\cite[Lem.~4.1(1)]{Suzuki-1990-EuJC11[6]:601-607} and~\cite[Lem.~2.3]{Kiermaier-Laue-2015-AiMoC9[1]:105-115}.
	For both combinatorial and subspace designs, Theorem~\ref{thm:divisibility}\ref{thm:divisibility:combined} appears in~\cite[Lem.~2.6]{Kiermaier-Mannaert-Wassermann-2025-JCTSA212:P105979}.

	In the Hamming semilattice $H(m,n)$, $t$-designs of index $\lambda$ correspond to orthogonal arrays with parameters $\OA(\lambda m^t,n,m,t)$ \cite[Th.~4.4]{Delsarte-1973-PhilRRSuppl10}; for a standard reference on orthogonal arrays, see the book~\cite{Hedayat-Sloane-Stufken-1999-OrthogonalArrays}.
	General designs in the $q$-Hamming semilattices $H_q(m,n)$ seem to have received comparatively little attention in the literature.
	Perhaps the reason is that, unlike in the other three families of regular semilattices, Steiner systems, i.e., designs of index $1$, always exist when $H_q(m,n)$ is short, making the study of designs of index $\lambda \geq 2$ significantly less attractive.
	For the Hamming and $q$-Hamming semilattices, Theorem~\ref{thm:divisibility}\ref{thm:divisibility:combined} yields $1 \mid \lambda$, so the integrality conditions are trivial.
\end{example}

\begin{remark}\label{rem:general_without_nu}
	All statements in this section that do not explicitly involve the function~$\nu$ remain valid without assuming the regularity property~\ref{regular:nu}.
	In particular, Lemma~\ref{lem:theta_double} and Fact~\ref{fct:general:lambda_i} rely only on~\ref{regular:theta} and~\ref{regular:mu}.
\end{remark}

\section{Mendelsohn equations in regular semilattices}\label{sec:semilattice_mendelsohn}
Since the original Mendelsohn equations arise from the study of intersections, we now equip the poset $(X,\le)$ with the structure of a $\wedge$-semilattice.
We assume that $X$ is finite and non-empty; in this case, the semilattice structure forces a bottom element, namely
\[
	\bot = \bigwedge_{x\in X} x\text{.}
\]
We further assume that $(X,{\leq})$ satisfies the regularity properties~\ref{regular:theta}, \ref{regular:mu}, and~\ref{regular:nu}, so that the present setting is a refinement of that considered in Section~\ref{sec:poset_design}.

\begin{definition}
	For $r\in\{0,\ldots,n\}$, the \emph{intersection numbers} of $\beta\in \C^{\Omega}$ with respect to $a\in X$ are defined as
	\[
		\alpha_r(a) = \sum_{\substack{y\in \Omega \\ h(y \wedge a) = r}} \beta(y)\text{.}
	\]
\end{definition}

\begin{lemma}\label{lem:general:pre_mendelsohn}
	Let $\beta\in \C^{\Omega}$ and $a\in X$.
	Then for all $r\in\{0,\ldots,n\}$,
	\[
		\sum_{s=r}^n \nu(r,s)\alpha_s(a) = \sum_{\substack{x\in X_r \\ x \leq a}} \lambda(x)\text{.}
	\]
\end{lemma}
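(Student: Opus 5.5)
The identity follows from double counting the set of pairs $(x,y)\in X_r\times\Omega$ with $x\leq a$ and $x\leq y$, each pair weighted by $\beta(y)$. We evaluate the weighted sum
\[
	\Sigma \coloneqq \sum_{\substack{x\in X_r\\ x\leq a}}\;\sum_{\substack{y\in\Omega\\ x\leq y}}\beta(y)
\]
in two ways. Summing over $y$ first, the inner sum is by definition $\lambda(x)$, so $\Sigma$ equals the right-hand side of the claim.

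For the other evaluation, we interchange the order of summation. For fixed $y\in\Omega$, the condition on $x$ becomes $x\in X_r$ with $x\leq a$ and $x\leq y$. Since $(X,{\leq})$ is a $\wedge$-semilattice, this is equivalent to $x\in X_r$ with $x\leq y\wedge a$. By the regularity property~\ref{regular:nu}, the number of such $x$ is $\nu(r,s)$, where $s=h(y\wedge a)$. This uses the extension $\nu(r,s)=0$ for $r>s$, which is consistent because no element of height $r$ lies below an element of height $s<r$: indeed $x\leq z$ implies $h(x)\leq h(z)$, since $h(z)\geq h(x)+\ell(x,z)$. Hence
\[
	\Sigma=\sum_{y\in\Omega}\beta(y)\,\nu\bigl(r,h(y\wedge a)\bigr).
\]

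Next we group the $y\in\Omega$ according to $s=h(y\wedge a)\in\{0,\ldots,n\}$. By the definition of the intersection numbers, this gives $\Sigma=\sum_{s=0}^n\nu(r,s)\alpha_s(a)$. The terms with $s<r$ vanish, which leaves $\sum_{s=r}^n\nu(r,s)\alpha_s(a)$, the left-hand side of the claim.

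The only delicate point is the reduction of the double condition $x\leq a$, $x\leq y$ to the single condition $x\leq y\wedge a$, which is exactly where the semilattice structure enters. Together with this, one has to check that \ref{regular:nu} is applied to an element of the correct level $s=h(y\wedge a)$. Everything else is a routine interchange of finite sums.
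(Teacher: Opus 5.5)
Your proof is correct and is essentially the paper's argument. The paper likewise counts the $x\in X_r$ with $x\le y\wedge a$ as $\nu\bigl(r,h(y\wedge a)\bigr)$ and interchanges the finite sums to reach $\sum_{x\in X_r,\,x\le a}\lambda(x)$; your explicit check that the terms with $h(y\wedge a)<r$ vanish is a step the paper leaves implicit.
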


\begin{proof}
	For all $r\in\{0,\ldots,n\}$,
	\[
		\sum_{s=r}^n \alpha_s(a)\nu(r,s)
		= \sum_{s=r}^n \sum_{\substack{y\in \Omega \\ h(y \wedge a) = s}} \beta(y)\sum_{\substack{x\in X_r\\ x \leq y\wedge a}} 1
		= \sum_{\substack{x\in X_r \\ x \leq a}} \sum_{\substack{y\in \Omega \\ x \leq y}}\beta(y)
		= \sum_{\substack{x\in X_r \\ x \leq a}} \lambda(x)\text{.}
	\]
\end{proof}

\begin{theorem}\label{thm:general:mendelsohn}
	Let $\beta\in \C^{\Omega}$ be a $t$-design, and let $a\in X$.
	Then the following systems of equations hold, and they are equivalent.
	\begin{enumerate}[(a)]
		\item\label{thm:general:mendelsohn:mendelsohn}\emph{Mendelsohn equations.}\nobreak\hspace{1em}\ignorespaces
		For all $r\in\{0,\ldots,t\}$,
		\[
			\sum_{s=r}^n \nu(r,s) \alpha_s(a) = \nu(r,h(a)) \lambda_r\text{.}
		\]
		The right-hand side admits the alternative expressions
		\[
			\nu(r,h(a)) \lambda_r
			= \nu(r,h(a)) \frac{\theta(r)}{\theta(t)} \lambda
			= \nu(r,h(a)) \frac{\theta(r)}{\theta(0)} \#\beta\text{.}
		\]
		\item\label{thm:general:mendelsohn:koehler}\emph{Köhler parametrization.}\nobreak\hspace{1em}\ignorespaces
		For all $r\in\{0,\ldots,t\}$,
		\[
			\alpha_r(a)
			= \sum_{s=r}^t \nu'(r,s)\nu(s,h(a)) \lambda_s - \sum_{u=t+1}^n \Big(\sum_{s=r}^t \nu'(r,s)\nu(s,u)\Big)\alpha_u(a)\text{,}
		\]
		where the inner sum on the right-hand side admits the alternative form
		\[
			\sum_{s=r}^t \nu'(r,s)\nu(s,u) = -\sum_{s=t+1}^u \nu'(r,s)\nu(s,u)\text{.}
		\]
	\end{enumerate}
\end{theorem}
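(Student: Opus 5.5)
The Mendelsohn equations follow directly from Lemma~\ref{lem:general:pre_mendelsohn}. Fix $r\in\{0,\ldots,t\}$. By Fact~\ref{fct:general:lambda_i}, $\beta$ is an $r$-design of index $\lambda_r$, so $\lambda(x)=\lambda_r$ for every $x\in X_r$. The right-hand side of Lemma~\ref{lem:general:pre_mendelsohn} is therefore $\lambda_r\cdot\#\{x\in X_r\mid x\leq a\}$. By~\ref{regular:nu}, this count is $\nu(r,h(a))$; for $r>h(a)$ it is $0$, consistent with the convention $\nu(r,s)=0$ for $r>s$. This gives~\ref{thm:general:mendelsohn:mendelsohn}. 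The alternative expressions for the right-hand side are $\lambda_r=\lambda\theta(r)/\theta(t)$ and $\lambda_r=\#\beta\,\theta(r)/\theta(0)$, both taken from Fact~\ref{fct:general:lambda_i}.

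For the equivalence with~\ref{thm:general:mendelsohn:koehler}, I would separate the known from the unknown terms. Set $w_s=\nu(s,h(a))\lambda_s-\sum_{u=t+1}^n\nu(s,u)\alpha_u(a)$ for $s\in\{0,\ldots,t\}$. Then~\ref{thm:general:mendelsohn:mendelsohn} is equivalent to the truncated system $\sum_{j=i}^t\nu(i,j)\alpha_j(a)=w_i$ for $i\in\{0,\ldots,t\}$. Applying $\nu$-inversion (Lemma~\ref{lem:nu_inversion}) with $r$ replaced by $t$, this is equivalent to $\alpha_r(a)=\sum_{s=r}^t\nu'(r,s)w_s$ for all $r\in\{0,\ldots,t\}$. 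Expanding $w_s$ and exchanging the finite sums over $s$ and $u$ yields exactly the Köhler formula. Each step is an equivalence, so the two systems are equivalent, and since~\ref{thm:general:mendelsohn:mendelsohn} holds, so does~\ref{thm:general:mendelsohn:koehler}.

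For the alternative form of the inner sum, use $(\nu')(\nu)=I$. For $u\geq t+1>r$, the $(r,u)$ entry gives $\sum_{s=r}^u\nu'(r,s)\nu(s,u)=0$; the sum runs only over $r\le s\le u$ because both matrices are upper triangular. Splitting this sum at $s=t$ gives the claimed identity.

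The only delicate point is bookkeeping. One must check that the truncation to indices $\le t$ is legitimate, that is, that the top-left block of the inverse equals the inverse of the top-left block; this is exactly what Lemma~\ref{lem:nu_inversion} provides. One must also handle the case $h(a)<r$ correctly via the zero convention for $\nu$. Beyond that, the argument involves no real obstacle.
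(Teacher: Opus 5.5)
Your proposal is correct and follows the paper's proof essentially step by step. Part~(a) uses Lemma~\ref{lem:general:pre_mendelsohn} together with Fact~\ref{fct:general:lambda_i} and the count $\nu(r,h(a))$; Part~(b) moves the terms with $u\geq t+1$ to the right-hand side and applies $\nu$-inversion (Lemma~\ref{lem:nu_inversion}) with $t$ in place of $r$; and the alternative inner sum comes from the $(r,u)$ entry of $(\nu')(\nu)=I$ split at $s=t$, with your remark on the case $r>h(a)$ making explicit a detail the paper leaves implicit.
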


\begin{proof}
	Part~\ref{thm:general:mendelsohn:mendelsohn}:
	For all $r\in\{0,\ldots,t\}$,
	\[
		\sum_{s=r}^n \alpha_s(a)\nu(r,s)
		\overset{\text{Lem.~\ref{lem:general:pre_mendelsohn}}}{=} \sum_{\substack{x\in X_r \\ x \leq a}} \lambda(x)
		\overset{\text{Fact~\ref{fct:general:lambda_i}}}{=} \sum_{\substack{x\in X_r \\ x \leq a}} \lambda_r
		= \nu(r,h(a))\lambda_r\text{.}
	\]
	The alternative expressions follow from Fact~\ref{fct:general:lambda_i}.

	Part~\ref{thm:general:mendelsohn:koehler}:
	Rearranging the equations in Part~\ref{thm:general:mendelsohn:mendelsohn} yields
	\[
		\sum_{s=r}^t \nu(r,s) \alpha_s(a) = \nu(r,h(a))\lambda_r - \sum_{u=t+1}^n \nu(r,u) \alpha_u(a)\qquad\text{for all }r\in\{0,\ldots,t\}\text{.}
	\]
	By the principle of $\nu$-inversion from Lemma~\ref{lem:nu_inversion}, this is equivalent to
	\[
		\alpha_r(a) = \sum_{s=r}^t \nu'(r,s)\Big(\nu(s,h(a))\lambda_s - \sum_{u=t+1}^n \nu(s,u) \alpha_u(a)\Big)\qquad\text{for all }r\in\{0,\ldots,t\}\text{.}
	\]
	Expanding the right-hand side, we obtain the claimed expression.
	\[
		\alpha_r(a) = \sum_{s=r}^t \nu'(r,s)\nu(s,h(a))\lambda_s - \sum_{u=t+1}^n \Big(\sum_{s=r}^t \nu'(r,s)\nu(s,u)\Big) \alpha_u(a)\text{.}
	\]

	For the alternative form of the inner sum, we use that $(\nu)$ and $(\nu')$ are upper triangular and mutually inverse, and that $r \le t < u$.
	Thus,
	\[
		\sum_{s=r}^n \nu'(r,s)\nu(s,u)
		= \sum_{s=0}^n \nu'(r,s) \nu(s,u)
		= \delta_{ru}
		= 0\text{,}
	\]
	and therefore
	\[
		\sum_{s=r}^t \nu'(r,s)\nu(s,u)
		= - \sum_{s=t+1}^n \nu'(r,s) \nu(s,u)
		= - \sum_{s=t+1}^u \nu'(r,s) \nu(s,u)\text{.}
	\]
\end{proof}

\begin{remark}
	Part~\ref{thm:general:mendelsohn:koehler} of Theorem~\ref{thm:general:mendelsohn} is a linear transform of the system in Part~\ref{thm:general:mendelsohn:mendelsohn}, expressing the numbers $\alpha_0(a),\ldots,\alpha_t(a)$ in terms of $\alpha_{t+1}(a),\ldots,\alpha_n(a)$.
	This can be advantageous, especially in situations where only a few of the numbers $\alpha_{t+1}(a),\ldots,\alpha_n(a)$ are non-zero.
	In particular, this occurs when $t$ is close to $n$, and also for Steiner systems $D$, which is the basis for Theorem~\ref{thm:general:steiner_block_intersection_distribution} below.
\end{remark}

\begin{remark}
	We investigate the specialization of Theorem~\ref{thm:general:mendelsohn} to our standard regular semilattices.
	For Johnson schemes, i.\,e., for combinatorial designs $D \subseteq \binom{V}{k}$, Part~\ref{thm:general:mendelsohn:mendelsohn} of Theorem~\ref{thm:general:mendelsohn} was shown for blocks $a\in D$ in~\cite{Mendelsohn-1969-NAMS16[6]:984,Mendelsohn-1971} and, for arbitrary subsets $a\subseteq V$, in~\cite[Eq.~3]{Goethals-1970-InstStatistMimeoSer600_29} and independently in \cite[Satz~2]{Oberschelp-1972-MPSemBNF19:55-67}.
	Part~\ref{thm:general:mendelsohn:koehler} appears in~\cite[Satz~1]{Koehler-1989-DM73[1-2]:133-142} and, independently and with a considerably cleaner proof, in~\cite[Satz~1]{Harnau-1988-RostMKoll34:47-52}.
	For $q$-Johnson schemes, i.\,e., $q$-analogs of designs, both parts are contained in~\cite[Th.~2.4 and Th.~2.6]{Kiermaier-Pavcevic-2015-JCD23[11]:463-480}.
	For Hamming schemes, i.\,e., orthogonal arrays, Theorem~\ref{thm:general:mendelsohn}\ref{thm:general:mendelsohn:mendelsohn} reproduces the system of equations in~\cite[Eq.~(3.0b)]{Bose-Bush-1952-AnnMathStatistics23[4]:508-524} (see also~\cite[Lemma~2.7]{Hedayat-Sloane-Stufken-1999-OrthogonalArrays}), which has been used to establish the non-existence of orthogonal arrays with certain parameters, see for example Theorem~2A and Theorem~2B in~\cite{Bose-Bush-1952-AnnMathStatistics23[4]:508-524}.
	We are not aware of a published version of Part~\ref{thm:general:mendelsohn:koehler} for Hamming schemes, nor of either part for $q$-Hamming schemes.
	Hence, we state the corresponding specialization below as Corollary~\ref{cor:hamming:mendelsohn}.
\end{remark}

\begin{corollary}[{{Mendelsohn equations in hypercubic semilattices}}]\label{cor:hamming:mendelsohn}\thmheadernewline
	Let $(X,{\leq}) = H_q(m,n)$ be a Hamming or a $q$-Hamming semilattice, where possibly $q=1$, and let $\Omega$ be its top level.
	Let $\beta : \Omega \to \C$ be a $t$-design of index $\lambda$.
	Let $a\in X$.
	With the height function $h$ and the symbol $M$ as in Section~\ref{subsec:classical_lattices:hypercubic}, the following systems of equations hold, and they are equivalent.
	\begin{enumerate}[(a)]
		\item\label{cor:hamming:mendelsohn:mendelsohn}
			\emph{Mendelsohn equations.}\nobreak\hspace{1em}\ignorespaces
			For all $r\in\{0,\ldots,t\}$,
			\[
				\sum_{s=r}^n \qbinom{s}{r}{q}\alpha_s(a)
				= \qbinom{h(a)}{r}{q} (\#M)^{t-r}\lambda\text{.}
			\]
			The right-hand side admits the alternative expression
			\[
				\qbinom{h(a)}{r}{q} (\#M)^{t-r}\lambda
				=
				\qbinom{h(a)}{r}{q}(\#M)^{-r}\#\beta\text{.}
			\]
		\item\label{cor:hamming:mendelsohn:koehler}
			\emph{Köhler parametrization.}\nobreak\hspace{1em}\ignorespaces
			For all $r\in\{0,\ldots,t\}$,
			\begin{align*}
				\alpha_r(a)
				& = \qbinom{h(a)}{r}{q}(\#M)^{t-r}\lambda \sum_{j=0}^{t-r} (-1)^j q^{\binom{j}{2}} \qbinom{h(a)-r}{j}{q}(\#M)^{-j} \\
				& {\phantom{~=~}} -\sum_{u=t+1}^n \qbinom{u}{r}{q} \left( \sum_{j=0}^{t-r} (-1)^j q^{\binom{j}{2}} \qbinom{u-r}{j}{q} \right) \alpha_u(a)\text{.}
			\end{align*}
			The inner sum on the right-hand side admits the alternative form
			\[
				\sum_{j=0}^{t-r} (-1)^j q^{\binom{j}{2}} \qbinom{u-r}{j}{q}
				= - \sum_{j=t-r+1}^{u-r} (-1)^j q^{\binom{j}{2}} \qbinom{u-r}{j}{q}\text{.}
			\]
	 \end{enumerate}
\end{corollary}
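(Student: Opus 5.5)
This corollary is a direct specialization of Theorem~\ref{thm:general:mendelsohn}. We substitute the regularity parameters of $H_q(m,n)$ from Section~\ref{subsec:classical_lattices:hypercubic}, namely $\nu(r,s)=\qbinom{s}{r}{q}$ and $\theta(r)=\theta(r,n)=(\#M)^{n-r}$, together with the inverse $\nu'(r,s)=(-1)^{s-r}q^{\binom{s-r}{2}}\qbinom{s}{r}{q}$ from the remark after that section. For Part~\ref{cor:hamming:mendelsohn:mendelsohn} we read off $\nu(r,h(a))=\qbinom{h(a)}{r}{q}$. Fact~\ref{fct:general:lambda_i} gives $\lambda_r=\lambda\,\theta(r)/\theta(t)=(\#M)^{t-r}\lambda$. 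Since $\theta(0)=(\#M)^n$, we also get $\lambda_r=\theta(r)\#\beta/\theta(0)=(\#M)^{-r}\#\beta$, which is the alternative expression.

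For Part~\ref{cor:hamming:mendelsohn:koehler} we treat the two sums of the Köhler formula separately. In both we use the $q$-binomial product identity
\[
\qbinom{s}{r}{q}\qbinom{u}{s}{q}=\qbinom{u}{r}{q}\qbinom{u-r}{s-r}{q},
\]
and we reindex by $j=s-r$.

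\emph{First sum.} This is $\sum_{s=r}^t\nu'(r,s)\nu(s,h(a))\lambda_s$. Each term equals $(-1)^{s-r}q^{\binom{s-r}{2}}\qbinom{s}{r}{q}\qbinom{h(a)}{s}{q}(\#M)^{t-s}\lambda$. Applying the identity with $u=h(a)$ and pulling out the factor $\qbinom{h(a)}{r}{q}(\#M)^{t-r}\lambda$ leaves
\[
\sum_{j=0}^{t-r}(-1)^jq^{\binom{j}{2}}\qbinom{h(a)-r}{j}{q}(\#M)^{-j},
\]
as claimed. Terms with $s>h(a)$ vanish on both sides, so the summation ranges agree.

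\emph{Second sum.} The coefficient $\sum_{s=r}^t\nu'(r,s)\nu(s,u)$ becomes $\qbinom{u}{r}{q}\sum_{j=0}^{t-r}(-1)^jq^{\binom{j}{2}}\qbinom{u-r}{j}{q}$ in the same way.

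The alternative form of the inner sum is the same rewriting applied to the alternative form in Theorem~\ref{thm:general:mendelsohn}\ref{thm:general:mendelsohn:koehler}, i.e.\ to $-\sum_{s=t+1}^u\nu'(r,s)\nu(s,u)$, followed by division by $\qbinom{u}{r}{q}\neq 0$ (valid since $r\le t<u$). As an independent check, it also follows from the $q$-binomial theorem, which gives $\sum_{j=0}^{N}(-1)^jq^{\binom{j}{2}}\qbinom{N}{j}{q}=0$ for $N=u-r\geq1$.

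The equivalence of the two systems is inherited directly from Theorem~\ref{thm:general:mendelsohn}. The only step needing care is the $q$-binomial product identity, including the boundary cases where $h(a)<s$ makes terms zero. Everything else is bookkeeping of exponents of $\#M$.
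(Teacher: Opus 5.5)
Your proposal is correct and follows essentially the same route as the paper. You specialize Theorem~\ref{thm:general:mendelsohn} using $\nu$, $\nu'$ and $\theta(r)=(\#M)^{n-r}$, apply the trinomial identity $\qbinom{s}{r}{q}\qbinom{u}{s}{q}=\qbinom{u}{r}{q}\qbinom{u-r}{s-r}{q}$ in the $s$-sums, pull out the factor independent of $s$, and reindex by $j=s-r$. Your independent check of the alternative inner-sum form via the $q$-binomial theorem and your handling of the boundary terms with $s>h(a)$ are sound, and the paper leaves both implicit.
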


\begin{proof}
	We specialize Theorem~\ref{thm:general:mendelsohn} to the Hamming and $q$-Hamming semilattices.
	Part~\ref{cor:hamming:mendelsohn:mendelsohn} follows directly.
	For Part~\ref{cor:hamming:mendelsohn:koehler}, we start with the formula in Theorem~\ref{thm:general:mendelsohn}\ref{thm:general:mendelsohn:koehler} and specialize the symbols $\theta$, $\mu$, $\nu$, and $\nu'$ to their expressions in the $q$-Hamming schemes.
	We then use the trinomial identity
	\[
		\qbinom{x}{y}{q} \qbinom{y}{z}{q} = \qbinom{x}{z}{q}\qbinom{x-z}{y-z}{q}
	\]
	in the $s$-sums, move the factor independent of $s$ out of the summations, and finally reindex by setting $j = s-r$.
\end{proof}

\section{Codes and Steiner systems}\label{sec:steiner}

This section treats the special case of Steiner systems, which occupy a distinguished place in the theory:
they exhibit strong structural properties, such as the constant block intersection numbers in Theorem~\ref{thm:general:steiner_block_intersection_distribution}.
At the same time, through their correspondence with certain extremal codes (Theorem~\ref{thm:singleton}), they are also of significant relevance in coding theory.

For these considerations, it is natural to focus on the set case.
Thus, we investigate subsets $A \subseteq \Omega$, identified with their characteristic functions $\chi_A : \Omega \to \{0,1\}$, rather than arbitrary functions $\beta : \Omega \to \C$.

\begin{definition}\label{def:steiner_code}
	Let $(X,{\leq})$ be a finite poset with bottom element.
	Let $A \subseteq \Omega$.
	\begin{enumerate}[(a)]
	\item\label{def:steiner_code:steiner}
		The set $A$ is called a \emph{Steiner system} of strength $t \in \{0,\ldots,n\}$ if $\lambda^{(A)}(x) = 1$ for all $x\in X_t$.
		Equivalently, $A$ is a $t$-design of index $\lambda = 1$.
	\item\label{def:steiner_code:code}
		The set $A$ is called a \emph{$d$-code} for $d\in\{1,\ldots,n\}$ if $\lambda^{(A)}(x) \leq 1$ for all $x\in X_{n-d+1}$.
		Furthermore, every subset of $\Omega$ is called a $0$-code, without further condition.
	\end{enumerate}
\end{definition}

\begin{remark}\label{rem:def_code}
	In the above definition, Steiner systems and codes were defined in parallel.
	As a result, codes are defined using the incidence condition of packing designs,%
	\footnote{
		$A \subseteq \Omega$ is called a \emph{packing design} of strength $t$ and index at most $\lambda$ if $\lambda^{(A)}(x)\leq \lambda$ for all $x\in X_t$.
		The complementary notion is that of a \emph{covering design}, where the condition $\lambda^{(A)}(x)\leq \lambda$ is replaced by $\lambda^{(A)}(x)\geq \lambda$.
	}
	which deviates from the usual coding-theoretic definition in terms of pairwise distances.
	In Lemma~\ref{lem:code_to_distance}, we show that this is nevertheless the expected coding-theoretic notion.
\end{remark}

\begin{lemma}\label{lem:lambda_monotone}
	Let $(X,{\leq})$ be a finite poset with bottom element.
	Let $A \subseteq \Omega$.
	Then for all $a,b\in X$ with $a \leq b$,
	\[
		\lambda^{(A)}(a) \geq \lambda^{(A)}(b)\text{.}
	\]
\end{lemma}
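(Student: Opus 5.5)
The plan is to compare the index sets of the two sums directly. By definition,
\[
	\lambda^{(A)}(a) = \#\{x\in A \mid a \leq x\}
	\qquad\text{and}\qquad
	\lambda^{(A)}(b) = \#\{x\in A \mid b \leq x\}\text{,}
\]
because $\chi_A$ takes only the values $0$ and $1$. So I will show that the second set is contained in the first.

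Let $x\in A$ with $b\leq x$. Since $a\leq b$, transitivity of the partial order gives $a\leq x$, so $x$ lies in the first set. This yields
\[
	\{x\in A \mid b \leq x\} \subseteq \{x\in A \mid a \leq x\}\text{,}
\]
and taking cardinalities gives $\lambda^{(A)}(b)\leq\lambda^{(A)}(a)$.

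There is no real obstacle; the only point to note is where the set case is used. The argument needs the summands $\chi_A(x)$ to be non-negative, so that enlarging the index set cannot decrease the sum. For a general complex-valued $\beta$ the statement could fail, which is why the lemma is formulated for subsets $A\subseteq\Omega$. None of the regularity properties or the semilattice structure is needed, consistent with the hypothesis of a mere finite poset with bottom element.
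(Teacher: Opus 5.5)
Your proof is correct and matches the paper's argument exactly: it establishes the containment $\{x\in A \mid b\leq x\}\subseteq\{x\in A \mid a\leq x\}$ via transitivity and then takes cardinalities. Your remarks that non-negativity of $\chi_A$ is what makes this work, and that no regularity or semilattice structure is needed, are also accurate.
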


\begin{proof}
	For all $a,b\in X$ with $a \leq b$, we have
	\[
		\{x \in A \mid a \leq x\} \supseteq \{x\in A \mid b \leq x\}\text{,}
	\]
	and the claim follows by taking cardinalities.
\end{proof}

\begin{lemma}\label{lem:code_dist_monotonicity}
	Let $(X,{\leq})$ be a finite poset with bottom element.
	Let $A \subseteq \Omega$ be a $d$-code.
	Then $A$ is an $e$-code for all $e\in\{0,\ldots,d\}$.
\end{lemma}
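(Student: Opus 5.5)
The plan is to reduce the statement to the monotonicity of $\lambda^{(A)}$ from Lemma~\ref{lem:lambda_monotone}, together with the chain structure from Lemma~\ref{lem:chain_Xr}.

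First I dispose of the trivial cases. If $e=0$, every subset of $\Omega$ is a $0$-code by definition. If $e=d$, there is nothing to show. So assume $1\le e<d\le n$. Set $r=n-d+1$ and $s=n-e+1$. Then $r<s\le n$, and the hypothesis says $\lambda^{(A)}(y)\le 1$ for all $y\in X_r$. What remains is to show $\lambda^{(A)}(x)\le 1$ for every $x\in X_s$.

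Fix $x\in X_s$. By Lemma~\ref{lem:chain_Xr} there is a chain $\bot=x_0<\dots<x_s=x$ of length $s$ with $h(x_i)=i$ for all $i$. Since $r\le s$, the element $y\coloneqq x_r$ lies in $X_r$ and satisfies $y\le x$. Lemma~\ref{lem:lambda_monotone} then gives
\[
	\lambda^{(A)}(x)\le\lambda^{(A)}(y)\le 1.
\]
Hence $A$ is an $e$-code.

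No step is a real obstacle. The only point needing care is that the poset need not be graded, so finding an element of height exactly $r$ below $x$ must use the ``moreover'' part of Lemma~\ref{lem:chain_Xr} rather than a grading assumption. No regularity hypotheses are needed, which matches the lemma's assumption of only a finite poset with bottom element.
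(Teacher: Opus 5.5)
Your proposal is correct and follows essentially the same route as the paper: for $e\geq 1$ and an element of $X_{n-e+1}$, Lemma~\ref{lem:chain_Xr} supplies an element of $X_{n-d+1}$ below it, and Lemma~\ref{lem:lambda_monotone} transfers the bound $\lambda^{(A)}\leq 1$ upward. Your separate treatment of $e=d$ is unnecessary but harmless. You are also right that the ``moreover'' part of Lemma~\ref{lem:chain_Xr} is what is needed in the non-graded setting.
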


\begin{proof}
	For $e = 0$, nothing is to be shown.
	For $e \geq 1$, let $y\in X_{n-e+1}$.
	By Lemma~\ref{lem:chain_Xr} and $e \leq d$, there exists an $x \in X_{n-d+1}$ with $x \leq y$.
	Now, by Lemma~\ref{lem:lambda_monotone}, we have
	\[
		\lambda^{(A)}(y) \leq \lambda^{(A)}(x) \leq 1\text{.}
	\]
\end{proof}

\begin{definition}\label{def:delta}
	Let $(X,{\leq})$ be a finite poset with bottom element.
	We define the map%
	\footnote{In this article, we use the convention $0\in\N$.}
	\[
		\delta : \Omega \times \Omega \to \N
	\]
	by setting $\delta(a,b)$ to be the smallest $r\in\N$ such that there exists an $x\in X_{n-r}$ with $x \leq a$ and $x\leq b$.
\end{definition}

\begin{remark}\label{rem:delta}
	The map $\delta$ is well-defined since $x = \bot$ is always a possible choice.
	If $(X,{\leq})$ is a $\wedge$-semilattice, then $\delta$ is the meet-height distance $\delta(a,b) = n - h(a \wedge b)$.

	Note that, in general, $\delta$ is not a metric on $\Omega$, even for short graded regular $\wedge$-semilattices.
	A suitable counterexample will be given in Example~\ref{ex:delta_not_metric}.
\end{remark}

\begin{lemma}\label{lem:code_to_distance}
	Let $(X,{\leq})$ be a finite poset with bottom element and let $A \subseteq \Omega$.
	\begin{enumerate}[(a)]
		\item\label{lem:code_to_distance:dist} Let $d\in\{0,\ldots,n\}$.
		Then $A$ is a $d$-code if and only if $\delta(a,b) \geq d$ for all distinct $a,b\in A$.
		\item\label{lem:code_to_distance:mindist} Assume that $\#A \geq 2$.%
		\footnote{This ensures that the minimum is taken over a non-empty set.}
		Then,
		\[
			\min_{\substack{a,b\in A \\ a\neq b}}\delta(a,b)
			= \max\{d\in\{0,\ldots,n\} \mid A \text{ is a }d\text{-code}\}\text{.}
		\]
		This number is called the \emph{minimum distance} of $A$.
	\end{enumerate}
\end{lemma}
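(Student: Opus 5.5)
Part~\ref{lem:code_to_distance:dist} follows by unwinding Definition~\ref{def:steiner_code} and Definition~\ref{def:delta} and comparing both conditions in the contrapositive. The case $d=0$ is trivial: every subset is a $0$-code, and $\delta(a,b)\geq 0$ always holds since $\delta$ takes values in $\N$. So let $d\geq 1$ and put $k = n-d+1$.

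Suppose first that $A$ is not a $d$-code. Then there is an $x\in X_k$ with $\lambda^{(A)}(x)\geq 2$, so there are distinct $a,b\in A$ with $x\leq a$ and $x\leq b$. Since $x\in X_{n-(d-1)}$, the definition of $\delta$ as a minimum gives $\delta(a,b)\leq d-1 < d$.

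Conversely, suppose there are distinct $a,b\in A$ with $r\coloneqq\delta(a,b)\leq d-1$. Then some $y\in X_{n-r}$ satisfies $y\leq a$ and $y\leq b$. We have $n-r\geq n-d+1 = k$, and Lemma~\ref{lem:chain_Xr} applied to $y$ yields an $x\in X_k$ with $x\leq y$. Hence $x\leq a$ and $x\leq b$, so $\lambda^{(A)}(x)\geq 2$, and $A$ is not a $d$-code. The only point that needs care is this descent from level $n-r$ to level $k$, which is exactly what Lemma~\ref{lem:chain_Xr} provides; a non-graded poset causes no problem.

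For Part~\ref{lem:code_to_distance:mindist}, let $m$ be the minimum, which exists because $\#A\geq 2$. It suffices to show that $m\in\{0,\ldots,n\}$. Since $a\neq b$ are both in $\Omega$, neither is below the other in a way that fits a meet at level $n$: any $x\in X_n$ with $x\leq a$ and $x\leq b$ would force $x=a=b$, because $h(a)=n$ and the chain argument gives $\ell(x,a)=0$. Also $\bot$ is a common lower bound, so $\delta(a,b)\leq n$. Hence $m\in\{0,\ldots,n\}$.

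By Part~\ref{lem:code_to_distance:dist}, $A$ is a $d$-code exactly when $d\leq m$, for $d\in\{0,\ldots,n\}$. The set of such $d$ is therefore $\{0,\ldots,m\}$, and its maximum is $m$.
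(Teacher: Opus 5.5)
Your proof is correct and follows the paper's argument. The direction \enquote{not a $d$-code $\Rightarrow$ some pair at distance $<d$} is identical, and in the other direction you inline the descent via Lemma~\ref{lem:chain_Xr} that the paper packages as Lemma~\ref{lem:code_dist_monotonicity}. In Part~\ref{lem:code_to_distance:mindist}, your check that $\delta(a,b)\leq n$ via $\bot$ is exactly the detail the paper leaves implicit; your argument that $\delta(a,b)\geq 1$ is correct but unnecessary, since $\delta\geq 0$ already places $m$ in $\{0,\ldots,n\}$.
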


\begin{proof}
	Part~\ref{lem:code_to_distance:dist}:
	For $d=0$, both conditions are automatic.
	Let $d\geq 1$.
	For \enquote{$\Rightarrow$}, let $A$ be a $d$-code and let $a,b\in A$ be distinct.
	By Lemma~\ref{lem:code_dist_monotonicity}, $A$ is an $e$-code for all $e\in\{1,\ldots,d\}$.
	For all these $e$, there does not exist an $x\in X_{n-e+1}$ with $x \leq a$ and $x \leq b$ by Definition~\ref{def:steiner_code}\ref{def:steiner_code:code}.
	Hence, by Definition~\ref{def:delta}, $\delta(a,b) \geq d$.
	For \enquote{$\Leftarrow$}, we argue by contraposition.
	Assume that $A$ is not a $d$-code.
	By Definition~\ref{def:steiner_code}\ref{def:steiner_code:code}, there exist distinct $a,b\in A$ and an $x\in X_{n-d+1}$ with $x \leq a$ and $x\leq b$.
	Now by Definition~\ref{def:delta}, $\delta(a,b) \leq d-1 < d$.

	Part~\ref{lem:code_to_distance:mindist} is now a direct consequence of Part~\ref{lem:code_to_distance:dist}.
\end{proof}

\begin{example}\label{ex:delta_not_metric}
	\begin{figure}
		\centering
		\begin{tikzpicture}
			\foreach \x in {0,...,4}{
			\coordinate (o\x) at (90+\x*72:4);
			\coordinate (i\x) at (90+\x*72:2);
			};

			\foreach \x in {0,...,4}{
			\draw[thick, dotted] (o\x) -- (i\x);
			};

			\draw[thick] (o0) -- (o1);
			\draw[thick] (o1) -- (o2);
			\draw[thick] (o2) -- (o3);
			\draw[thick] (o3) -- (o4);
			\draw[thick] (o4) -- (o0);

			\draw[thick, dashed] (i0) -- (i2);
			\draw[thick, dashed] (i1) -- (i3);
			\draw[thick, dashed] (i2) -- (i4);
			\draw[thick, dashed] (i3) -- (i0);
			\draw[thick, dashed] (i4) -- (i1);

			\node[circle,draw,minimum size=19pt,inner sep=0pt,fill] at (o0){$\color{white}\symbf{14}$};
			\node[circle,draw,minimum size=19pt,inner sep=0pt,fill] at (o1){$\color{white}\symbf{02}$};
			\node[circle,draw,minimum size=19pt,inner sep=0pt,fill] at (o2){$\color{white}\symbf{13}$};
			\node[circle,draw,minimum size=19pt,inner sep=0pt,fill] at (o3){$\color{white}\symbf{24}$};
			\node[circle,draw,minimum size=19pt,inner sep=0pt,fill] at (o4){$\color{white}\symbf{03}$};
			\node[circle,draw,minimum size=19pt,inner sep=0pt,fill] at (i0){$\color{white}\symbf{23}$};
			\node[circle,draw,minimum size=19pt,inner sep=0pt,fill] at (i1){$\color{white}\symbf{34}$};
			\node[circle,draw,minimum size=19pt,inner sep=0pt,fill] at (i2){$\color{white}\symbf{04}$};
			\node[circle,draw,minimum size=19pt,inner sep=0pt,fill] at (i3){$\color{white}\symbf{01}$};
			\node[circle,draw,minimum size=19pt,inner sep=0pt,fill] at (i4){$\color{white}\symbf{12}$};
		\end{tikzpicture}
		\caption{Petersen graph}\label{fig:petersen}
	\end{figure}

	As announced in Remark~\ref{rem:delta}, we construct a short graded regular $\wedge$-semilattice such that $\delta$ is not a metric on $\Omega$.
	Our example is derived from the Petersen graph.
	Consider its standard visualization depicted in Figure~\ref{fig:petersen}.
	The ten vertices are partitioned into five outer vertices and five inner vertices.
	The $15$ edges are partitioned into five outer edges forming a pentagon, i.e., the $5$-cycle formed by the solid lines; five inner edges forming a pentagram, shown as dashed lines; and five spokes, shown as dotted lines, each joining an outer vertex to an inner vertex.
	There are five pairs of opposite edges, each consisting of one outer edge and one inner edge, with the property that no endpoint of the outer edge is joined by a spoke to an endpoint of the inner edge.
	For instance, the edges $\{12,34\}$ and $\{13,24\}$ are opposite in Figure~\ref{fig:petersen}.
	We define
	\begin{align*}
		X_0 & = \{\bot\}\text{,} & & \#X_0 = 1\text{,} \\
		X_1 & = \text{all pairs of opposite edges,} & & \#X_1 = 5\text{,} \\
		X_2 & = \text{all outer and inner edges,} & & \#X_2 = 10\text{,} \\
		X_3 & = \text{all vertices,} & & \#X_3 = 10\text{,}
	\end{align*}
	and put $X = X_0 \cup X_1 \cup X_2 \cup X_3$.
	The order $\leq$ on $X$ is induced by incidence:
	$\bot$ is covered by every element of $X_1$, each opposite pair in $X_1$ is covered by its two edges in $X_2$, and each edge in $X_2$ is covered by its two endpoints in $X_3$.

	This definition can be made formally precise using the standard Kneser description of the Petersen graph based on the $5$-element set $\Z/5\Z$.
	As indicated in Figure~\ref{fig:petersen},%
	\footnote{In Figure~\ref{fig:petersen}, for the vertices, the shorthand $ij$ for $\{i,j\}$ is used.}
	its vertex set is $\binom{\Z/5\Z}{2}$, and two vertices are adjacent if and only if the corresponding $2$-subsets of $\Z/5\Z$ are disjoint.
	The outer and inner vertices can be represented as
	\[
		v^{(o)}_j = \{j-1,j+1\}
		\qquad\text{and}\qquad
		v^{(i)}_j = \{j-2,j+2\}\text{,}
	\]
	respectively, where $j\in\Z/5\Z$.%
	\footnote{Since $j\in\Z/5\Z$, expressions such as $j-2$ and $j+2$ are read modulo $5$.}
	Hence, the outer and inner edges have the form
	\[
		e^{(o)}_j = \{v^{(o)}_{j-2}, v^{(o)}_{j+2}\}
		\qquad\text{and}\qquad
		e^{(i)}_j = \{v^{(i)}_{j-1}, v^{(i)}_{j+1}\}\text{,}
	\]
	respectively.
	The spokes have the form $\{v^{(o)}_j, v^{(i)}_j\}$, and the opposite pairs of outer and inner edges have the form $\{e^{(o)}_j, e^{(i)}_j\}$.

	It is routine to check that $(X,{\leq})$ is a short graded regular $\wedge$-semilattice, with the following matrices of regularity parameters:
	\[
		(\theta) =
		\begin{pmatrix}
			1 & 5 & 10 & 10 \\
			0 & 1 & 2  & 4  \\
			0 & 0 & 1  & 2  \\
			0 & 0 & 0  & 1
		\end{pmatrix}\text{,}\qquad
		(\mu) = 
		\begin{pmatrix}
			1 & 2 & 2 & 1 \\
			0 & 1 & 1 & 1 \\
			0 & 0 & 1 & 1 \\
			0 & 0 & 0 & 1
		\end{pmatrix}\text{,}\qquad
		(\nu) = \begin{pmatrix}
			1 & 1 & 1 & 1 \\
			0 & 1 & 1 & 2 \\
			0 & 0 & 1 & 2 \\
			0 & 0 & 0 & 1
		\end{pmatrix}\text{.}
	\]
	We describe $a\wedge b$ for $a,b\in\Omega$, i.e., for two vertices $a,b$ of the Petersen graph.
	Clearly, $a\wedge a=a\in X_3$, and hence, $\delta(a,a) = 0$.
	For $a\neq b$, we first consider the case where $a$ and $b$ are both outer vertices or both inner vertices.
	If $a$ and $b$ are adjacent, then $a\wedge b$ is the edge joining them, i.e., $a\wedge b=\{a,b\}\in X_2$ and $\delta(a,b) = 1$.
	If $a$ and $b$ are not adjacent, then $a\wedge b=\bot$ and $\delta(a,b) = 3$.
	It remains to consider the mixed case, where, without loss of generality, $a$ is an outer vertex and $b$ is an inner vertex.
	If $a$ and $b$ are joined by a spoke, then again $a\wedge b=\bot$ and $\delta(a,b) = 3$.
	Otherwise, a short check shows that there is a unique opposite pair $x\in X_1$ consisting of an outer edge containing $a$ and an inner edge containing $b$.
	Hence $a\wedge b=x\in X_1$ and $\delta(a,b) = 2$.
	In the precise representation, assume that $a = v^{(o)}_i$ and $b = v^{(i)}_j$.
	In the non-spoke case, the sets $\{i-2,i+2\}$ and $\{j-1,j+1\}$ intersect in a single element $k$, and then $a \wedge b = \{e^{(o)}_k, e^{(i)}_k\}$.

	Now let $a,b,c\in\Omega$ be three consecutive outer vertices.
	Then $\delta(a,b) = \delta(b,c) = 1$ and $\delta(a,c) = 3$.
	Thus, $\delta(a,c) \nleq \delta(a,b) + \delta(b,c)$, so the triangle inequality fails.
	Therefore, $\delta$ is not a metric on $\Omega$.

	We remark that $(X,{\leq})$ is not Delsarte-regular: property~\ref{regular:pi} fails for $\pi(0,1,2)$, $\pi(0,2,2)$, and $\pi(0,2,1)$.
\end{example}

\begin{theorem}[Singleton bound]\label{thm:singleton}
	Let $(X,{\leq})$ be a finite regular poset of height $n$.
        Let $A \subseteq \Omega$ be a $d$-code with $d\in\{1,\ldots,n\}$.
        Then
        \[
                \#A \leq \frac{\theta(0)}{\theta(n-d+1)}\text{.}
        \]
        Equality holds if and only if $A$ is a Steiner system of strength $n-d+1$.
\end{theorem}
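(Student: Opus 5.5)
Set $t = n-d+1$, so that $t\in\{1,\ldots,n\}$ and the $d$-code condition says $\lambda^{(A)}(x)\le 1$ for all $x\in X_t$. The plan is to double count the set
\[
	S = \{(x,y)\in X_t\times A \mid x\le y\}\text{,}
\]
in the same spirit as the proof of Lemma~\ref{lem:theta_double}.

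First count by $y$: for each $y\in A\subseteq\Omega = X_n$ the number of $x\in X_t$ with $x\le y$ is $\nu(t,n)$ by~\ref{regular:nu}, so $\#S = \#A\cdot\nu(t,n)$. Next count by $x$: for each $x\in X_t$ the number of $y\in A$ with $x\le y$ is $\lambda^{(A)}(x)\le 1$, so
\[
	\#S = \sum_{x\in X_t}\lambda^{(A)}(x) \le \#X_t\text{.}
\]
Lemma~\ref{lem:level_sizes} gives $\#X_t = \frac{\theta(0)}{\theta(t)}\nu(t,n)$. Since $\nu(t,n)\ge 1$, we may divide by it and obtain $\#A\le \frac{\theta(0)}{\theta(t)}$, which is the claimed bound.

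For the equality case, equality holds precisely when $\lambda^{(A)}(x)=1$ for every $x\in X_t$. Indeed, each term satisfies $\lambda^{(A)}(x)\le 1$, so the sum reaches $\#X_t$ only if every term equals $1$. This is exactly the definition of a Steiner system of strength $t$. Conversely, a Steiner system of strength $t$ has $\lambda^{(A)}\le 1$ on $X_t$, so it is a $d$-code, and by the count above (or by Fact~\ref{fct:general:lambda_i} with $\lambda=1$) its size is $\theta(0)/\theta(t)$.

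There is no real obstacle in this argument. The only points needing care are that $\nu(t,n)\ge 1$, so that division is allowed, and that the identity $\#X_t=\frac{\theta(0)}{\theta(t)}\nu(t,n)$ uses $\mu(0,t)=\nu(t,n)$. Alternatively, one can avoid $\nu$ entirely and count $S' = \{(x,y)\in X_t\times A \mid x\le y\}$ via $\mu(t,n)$-type data: $\#S'=\#A\cdot\mu(0,t)$ by~\ref{regular:mu} with $a=\bot$, and $\#X_t=\theta(0,t)=\frac{\theta(0)}{\theta(t)}\mu(0,t)$ by Lemma~\ref{lem:theta_double}.
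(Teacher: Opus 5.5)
Your proof is correct and is essentially the paper's argument. Both proofs double count the pairs $(x,y)\in X_t\times A$ with $x\le y$, getting $\#A\cdot\nu(t,n)$ on one side and at most $\#X_t$ on the other, then apply Lemma~\ref{lem:level_sizes}; equality holds exactly when $\lambda^{(A)}(x)=1$ for every $x\in X_t$. Your closing variant, which replaces $\nu(t,n)$ by $\mu(0,t)$ so that only \ref{regular:theta} and \ref{regular:mu} are used, is a valid small refinement. There you wrote ``$\mu(t,n)$-type data'' but meant $\mu(0,t)$.
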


\begin{proof}
	Let $t = n-d+1\in\{1,\ldots,n\}$.
	We count the set $S$ of all pairs $(x,y) \in X_t \times A$ with $x\leq y$ in two ways.
	On the one hand, there are $\#A$ elements $y\in A$, and for each such $y$ there are $\nu(t,n)$ choices for $x\in X_t$ with $x \leq y$.
	So $\#S = \#A\cdot \nu(t,n)$.
	On the other hand, there are $\#X_t$ choices for $x$, and since $A$ is a $d$-code, for each such $x$ there is at most one $y\in A$ with $x\leq y$.
	So $\#S \leq \#X_t$.
	Hence,
	\[
		\#A \leq \frac{1}{\nu(t,n)} \cdot \#X_t
		\overset{\text{Lem.~\ref{lem:level_sizes}}}{=} \frac{1}{\nu(t,n)} \cdot \frac{\theta(0)}{\theta(t)}\, \nu(t,n)
		= \frac{\theta(0)}{\theta(n-d+1)}\text{.}
	\]

	Equality holds if and only if every $x\in X_t$ lies below exactly one element of $A$, i.\,e., if and only if $\lambda(x)=1$ for all $x\in X_t$.
        This is precisely the condition that $A$ is a Steiner system of strength $t=n-d+1$.
\end{proof}

As in Mendelsohn's original article~\cite{Mendelsohn-1971}, we now focus on the case $a\in\Omega$.

\begin{theorem}\label{thm:general:steiner_block_intersection_distribution}
	Let $(X,{\leq})$ be a finite regular $\wedge$-semilattice of height $n$.
	Let $D \subseteq \Omega$ be a Steiner system of strength $t$.
	For each block $B\in D$, the \emph{block intersection distribution} at $B$ is given by
	\[
		\alpha_r(B) = \begin{cases}
			\displaystyle \sum_{s=r}^{t-1} \nu'(r,s)\nu(s,n) \left(\frac{\theta(s)}{\theta(t)}-1\right) & \text{if } r\in\{0,\ldots,t-1\}\text{,} \\
			0 & \text{if } r \in \{t,\ldots,n-1\}\text{,} \\
			1 & \text{if } r = n\text{.} \\
		\end{cases}
	\]
	In particular, the block intersection distribution is uniquely determined by the strength $t$ and the regularity parameters of the ambient poset.
\end{theorem}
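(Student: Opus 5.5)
We take $\beta=\chi_D$ and $a=B\in D$, so $h(B)=n$, and apply the Köhler parametrization of Theorem~\ref{thm:general:mendelsohn}\ref{thm:general:mendelsohn:koehler}. The strength~$t$ and index $\lambda=1$ give $\lambda_s=\theta(s)/\theta(t)$ by Fact~\ref{fct:general:lambda_i}. The whole proof reduces to determining the upper intersection numbers $\alpha_u(B)$ for $u\in\{t,\ldots,n\}$ directly from the Steiner property, and then substituting them into Köhler's formula.

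First, the upper numbers. We have $\alpha_n(B)=1$: for $y\in\Omega$, $h(y\wedge B)=n$ forces $y\wedge B\in\Omega$. Since $\Omega$ is an antichain and $y\wedge B\le B$, this gives $y\wedge B=B$, and likewise $y\wedge B=y$, so $y=B$. Next let $u\in\{t,\ldots,n-1\}$ and suppose some $y\in D$ satisfies $h(y\wedge B)=u\ge t$. By Lemma~\ref{lem:chain_Xr} there is an $x\in X_t$ with $x\le y\wedge B$, so $x\le y$ and $x\le B$. Because $\lambda^{(D)}(x)=1$, this forces $y=B$, and then $h(y\wedge B)=n\neq u$, a contradiction. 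Hence $\alpha_u(B)=0$ for $t\le u\le n-1$. This is exactly the middle case of the statement. Equivalently, $D$ is an $(n-t+1)$-code by the equality case of Theorem~\ref{thm:singleton}, and Lemma~\ref{lem:code_to_distance} gives the same conclusion.

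Now the lower numbers, $r\le t-1$. Köhler's formula with $h(a)=n$ gives
\[
\alpha_r(B)=\sum_{s=r}^{t}\nu'(r,s)\nu(s,n)\lambda_s-\sum_{u=t+1}^{n}\Big(\sum_{s=r}^{t}\nu'(r,s)\nu(s,u)\Big)\alpha_u(B).
\]
Only $u=n$ survives, with $\alpha_n(B)=1$. We write $\sum_{s=r}^{t}\nu'(r,s)\nu(s,n)$ for the $u=n$ coefficient and subtract it from the first sum, obtaining
\[
\alpha_r(B)=\sum_{s=r}^{t}\nu'(r,s)\nu(s,n)\Big(\frac{\theta(s)}{\theta(t)}-1\Big).
\]
The $s=t$ term vanishes, so the sum runs only to $t-1$, which is the claimed formula. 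For $t=0$ the first case is empty, and the remaining cases are consistent with $D=\{B\}$.

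The only real obstacle is the vanishing step for $\alpha_u$, $t\le u<n$. It needs an element of $X_t$ below $y\wedge B$, which Lemma~\ref{lem:chain_Xr} supplies without any gradedness assumption. After that, the remaining work is bookkeeping inside Köhler's formula. The final independence claim is immediate, since the resulting expression involves only $t$, $\nu$, $\nu'$ and $\theta$.
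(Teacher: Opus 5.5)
Your proposal is correct and matches the paper's proof: the Steiner property forces $\alpha_u(B)=0$ for $t\le u\le n-1$ and $\alpha_n(B)=1$. Substituting these values and $\lambda_s=\theta(s)/\theta(t)$ into the Köhler parametrization with $h(B)=n$ gives the formula, since the $s=t$ term vanishes. One edge case: when $t=n$ the $u$-sum is empty, but your subtraction step still holds because $\sum_{s=r}^{n}\nu'(r,s)\nu(s,n)=\delta_{rn}=0$ for $r<n$; the paper passes over this case in the same way.
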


\begin{proof}
	Let $B'\in D$.
	If $h(B \wedge B') \geq t$, then there exists a $T\in X_t$ with $T \leq B\wedge B'$, that is, with $T \leq B$ and $T \leq B'$.
	Since $D$ is a Steiner system of strength $t$, this forces $B = B'$.
	Hence, $\alpha_n(B) = 1$ and $\alpha_r(B) = 0$ for all $r\in\{t,\ldots,n-1\}$.

	Now for $r\in\{0,\ldots,t-1\}$, the Köhler parameterization from Theorem~\ref{thm:general:mendelsohn}\ref{thm:general:mendelsohn:koehler} yields
	\begin{align*}
		\alpha_r(B)
		& = \sum_{s=r}^t \nu'(r,s)\nu(s,n)\lambda_s - \sum_{u=t+1}^n \Big(\sum_{s=r}^t \nu'(r,s)\nu(s,u)\Big)\delta_{un} \\
		& = \sum_{s=r}^t \nu'(r,s)\nu(s,n)(\lambda_s - 1)\text{.}
	\end{align*}
	Using $\lambda_t = \lambda = 1$ and hence, by Fact~\ref{fct:general:lambda_i}, $\lambda_s = \frac{\theta(s)}{\theta(t)}$, we obtain the claimed expression.
\end{proof}

\begin{remark}
	In coding-theoretic language, Theorem~\ref{thm:general:steiner_block_intersection_distribution} applies to a code $C$ attaining equality in the Singleton bound and determines the distance distribution seen from any individual codeword.
	More precisely, for every codeword $c\in C$, the local distance distribution
	\[
		a_i(c)=\#\{c'\in C\mid \delta(c,c')=i\},
		\qquad (i\in\{0,\ldots,n\})
	\]
	is uniquely determined by the parameters of $C$, and the theorem gives an explicit formula for these numbers.

	It is worth emphasizing that Theorem~\ref{thm:general:steiner_block_intersection_distribution} holds in considerable generality.
	It requires neither an underlying association scheme nor a notion of duality, and it applies to unrestricted codes:
	No linearity assumption, or any comparable algebraic hypothesis, is imposed.
	Moreover, it determines the distance distribution from each individual codeword, not merely an averaged distribution such as Delsarte's inner distribution.

	The proof, based on Theorem~\ref{thm:general:mendelsohn}, isolates the essential double-counting argument.
	The intermediate expressions that occur in the proof are no more involved than the final formula itself, in contrast to approaches based on association schemes and their eigenvalues.
\end{remark}

\begin{example}[Codes and Steiner systems in triangular semilattices]\label{ex:codes_triangular}\thmheadernewline
	Codes in Johnson and $q$-Johnson semilattices are known as constant-weight codes and constant-dimension subspace codes, with distances scaled by the factor $2$.
	The bound from Theorem~\ref{thm:singleton} is found in~\cite[Sec.~4.3.2]{Delsarte-1973-PhilRRSuppl10} for Johnson semilattices and, essentially, in \cite[Th.~9]{Koetter-Kschischang-2008-IEEETIT54[8]:3579-3591} for the $q$-Johnson semilattices.
	Both sources call it the \emph{Singleton bound}.

	Steiner systems are the same as diameter-perfect constant-weight and constant-dimension codes, respectively \cite{Ahlswede-Aydinian-Khachatrian-2001-DCC22[3]:221-237,Schwartz-Etzion-2002-JCTSA97[1]:27-42}.
	It is known that for any strength $t\in\N$, there exist infinitely many Johnson and $q$-Johnson semilattices admitting a non-trivial Steiner system of strength $t$; see \cite{Keevash-2026-AMSS:to_appear,Glock-Kuehn-Lo-Osthus-2023-MemAmerMathSoc284[1406]} and \cite{Keevash-Sah-Sawhney-2025-PLMSTS131[1]:P70071}, respectively.
	However, concrete constructions are notoriously hard to find.
	In Johnson semilattices, only finitely many explicit constructions of Steiner systems with $t\in\{4,5\}$ are known to date, and none are known with $t \geq 6$; as a classical example, we mention the small and the large Witt designs, whose parameters are $5$-$(12,6,1)$ and $5$-$(24,8,1)$ \cite{Witt-1937-AmSUHamb12[1]:265-275}.
	For $q$-Johnson semilattices, examples are even scarcer: the only parameter set for which concrete Steiner systems of strength $t\geq 2$ are known is $2$-$(13,3,1)_2$ \cite{Braun-Etzion-Ostergard-Vardy-Wassermann-2016-ForumMathPi:e7}.

	Theorem~\ref{thm:general:steiner_block_intersection_distribution} yields the block intersection distribution of Steiner and $q$-Steiner systems, equivalently, the distance distribution of the corresponding diameter-perfect constant-weight and constant-dimension codes with respect to a fixed codeword; this is stated below as Corollary~\ref{cor:johnson_steiner_intersection_distribution}.
	For combinatorial designs, the uniqueness of the block intersection distribution is found in~\cite{Mendelsohn-1970-CanadJM22[5]:1010-1015}, and corresponding formula in~\cite{Goethals-1970-InstStatistMimeoSer600_29}.
	For subspace designs, to the best of our knowledge, both the uniqueness statement and the formula are new.
\end{example}

\begin{corollary}\label{cor:johnson_steiner_intersection_distribution}
	Let $D$ be an ordinary or a $q$-analog Steiner system.
	We denote its parameters by $t$-$(v,k,1)_q$, where possibly $q=1$.
	For each block $B\in D$, the \emph{block intersection distribution} at $B$ is given by
	\[
		\alpha_i(B) = \begin{cases}
			\displaystyle\qbinom{k}{i}{q} \sum_{j=0}^{t-i-1} (-1)^j q^{\binom{j}{2}} \qbinom{k-i}{j}{q} \left(\frac{\qbinom{v-i-j}{v-k}{q}}{\qbinom{v-t}{v-k}{q}} - 1\right) & \text{if } i\in\{0,\ldots,t-1\}\text{,} \\
			0 & \text{if } i \in \{t,\ldots,k-1\}\text{,} \\
			1 & \text{if } i = k\text{.} \\
		\end{cases}
	\]
	In particular, the block intersection distribution is uniquely determined by the parameters $v$, $k$, $t$, and, in the $q$-analog case, $q$.
\end{corollary}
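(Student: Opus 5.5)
The plan is to specialize Theorem~\ref{thm:general:steiner_block_intersection_distribution} to the Johnson semilattice $J(v,k)$ or the $q$-Johnson semilattice $J_q(v,k)$, so the height is $n=k$. In the design-in-semilattice language of Example~\ref{ex:designs_classical_semilattices}, $D$ is a Steiner system of strength $t$ in this semilattice. For a block $B\in D$ and another block $B'$, the meet is $B\wedge B'=B\cap B'$, with height $\#(B\cap B')$ or $\dim(B\cap B')$, so $\alpha_i(B)$ is exactly the block intersection number. The cases $i\in\{t,\ldots,k-1\}$ and $i=k$ then follow directly from the theorem, and the remaining task is to rewrite the general formula for $i\le t-1$.

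For $i\in\{0,\ldots,t-1\}$, the theorem gives
\[
	\alpha_i(B)=\sum_{s=i}^{t-1}\nu'(i,s)\nu(s,k)\Bigl(\frac{\theta(s)}{\theta(t)}-1\Bigr).
\]
I would insert the regularity parameters of the triangular type. First, $\nu(s,k)=\qbinom{k}{s}{q}$. Second, by the Remark in Section~\ref{sec:classical_lattices}, $\nu'(i,s)=(-1)^{s-i}q^{\binom{s-i}{2}}\qbinom{s}{i}{q}$. Third, $\theta(s)=\theta(s,k)=\qbinom{v-s}{k-s}{q}=\qbinom{v-s}{v-k}{q}$, using the symmetry of Gaussian binomials.

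Next, I would use the trinomial identity from the proof of Corollary~\ref{cor:hamming:mendelsohn}:
\[
	\qbinom{k}{s}{q}\qbinom{s}{i}{q}=\qbinom{k}{i}{q}\qbinom{k-i}{s-i}{q}.
\]
This lets me pull the factor $\qbinom{k}{i}{q}$ out of the sum. Reindexing with $j=s-i$, which runs from $0$ to $t-i-1$, then gives exactly the stated expression, with $\theta(i+j)/\theta(t)=\qbinom{v-i-j}{v-k}{q}/\qbinom{v-t}{v-k}{q}$. The final uniqueness statement is immediate, because the formula involves only $v$, $k$, $t$ and $q$.

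There is no real obstacle here; the argument is purely a specialization. The step that needs care is bookkeeping. I have to confirm that the upper summation limit $t-1$ becomes $j\le t-i-1$, and that the symmetry of Gaussian binomials $\qbinom{v-s}{k-s}{q}=\qbinom{v-s}{v-k}{q}$ is applied correctly. I also have to check that the case $q=1$ is covered by reading $\qbinom{\cdot}{\cdot}{1}$ as ordinary binomial coefficients, which holds because the Johnson parameters are exactly the $q=1$ specializations.
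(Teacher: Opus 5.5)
Your proposal is correct and matches the paper's proof essentially step for step. Like the paper, you specialize Theorem~\ref{thm:general:steiner_block_intersection_distribution} to $J_q(v,k)$ with $n=k$, substitute the triangular-type values of $\theta$, $\nu$ and $\nu'$, pull out $\qbinom{k}{i}{q}$ via the trinomial identity, and reindex with $j=s-i$. The only cosmetic difference is that you apply the symmetry $\qbinom{v-s}{k-s}{q}=\qbinom{v-s}{v-k}{q}$ at the start, whereas the paper applies it in the middle of its computation.
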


\begin{proof}
	We apply Theorem~\ref{thm:general:steiner_block_intersection_distribution}.
	The cases $i\geq t$ are immediate.
	For the remaining cases $i\in\{0,\ldots,t-1\}$, we proceed similarly to the proof of Corollary~\ref{cor:hamming:mendelsohn}\ref{cor:hamming:mendelsohn:koehler}:
	Starting with the formula in Theorem~\ref{thm:general:steiner_block_intersection_distribution}, we specialize the symbols $\theta$, $\mu$, $\nu$, and $\nu'$ to their expressions in the $q$-Johnson scheme and obtain
	\begin{align*}
		\alpha_i(B)
		& = \sum_{s=i}^{t-1} (-1)^{s-i} q^{\binom{s-i}{2}} \qbinom{s}{i}{q} \qbinom{k}{s}{q} \left(\frac{\qbinom{v-s}{k-s}{q}}{\qbinom{v-t}{k-t}{q}} - 1\right) \\
		& = \qbinom{k}{i}{q}\sum_{s=i}^{t-1} (-1)^{s-i} q^{\binom{s-i}{2}} \qbinom{k-i}{s-i}{q} \left(\frac{\qbinom{v-s}{v-k}{q}}{\qbinom{v-t}{v-k}{q}} - 1\right) \\
		& = \qbinom{k}{i}{q}\sum_{j=0}^{t-i-1} (-1)^{j} q^{\binom{j}{2}} \qbinom{k-i}{j}{q} \left(\frac{\qbinom{v-i-j}{v-k}{q}}{\qbinom{v-t}{v-k}{q}} - 1\right)\text{,}
	\end{align*}
	where we used the trinomial identity in the second step.
\end{proof}

\begin{example}
	As an illustration of Corollary~\ref{cor:johnson_steiner_intersection_distribution}, we compute the intersection distribution of a $5$-$(28,7,1)$ Steiner system as
	\[
		(\alpha_0, \alpha_1, \alpha_2, \alpha_3, \alpha_4, \alpha_5, \alpha_6, \alpha_7)
		= (465,\, 1470,\, 1764,\, 735,\, 245,\, 0,\, 0,\, 1)
	\]
	and that of a $2$-$(13,3,1)_2$ binary $q$-Steiner system as
	\[
		(\alpha_0, \alpha_1, \alpha_2, \alpha_3)
		= (\num{1587696},\, \num{9548},\, 0,\, 1)\text{.}
	\]
	Steiner systems with both parameters exist \cite{Denniston-1976-BLMS8[3]:263-267,Braun-Etzion-Ostergard-Vardy-Wassermann-2016-ForumMathPi:e7}.
\end{example}

\begin{example}[Hamming semilattices: Steiner systems and MDS codes]\label{ex:codes_hamming}\thmheadernewline
	Codes in the Hamming semilattices $H(m,n)$ are precisely $m$-ary block codes, and Theorem~\ref{thm:singleton} reproduces the classical Singleton bound \cite{Singleton-1964-IEEETIT10[2]:116-118}.
	Steiner systems of strength $t$ in $H(m,n)$ were originally studied as orthogonal arrays of \emph{index unity} \cite{Bush-1952-AnnMathStatistics23[3]:426-434}.
	Through the coding correspondence, they are the same as $m$-ary MDS codes of length $n$, size $m^t$, and minimum Hamming distance $n-t+1$, that is, MDS codes with parameters $(n,m^t,n-t+1)_m$; see Delsarte~\cite[Sec.~4.3.2]{Delsarte-1973-PhilRRSuppl10} (based on~\cite[Th.~4.4]{Delsarte-1973-PhilRRSuppl10}) or \cite[Th.~4.21]{Hedayat-Sloane-Stufken-1999-OrthogonalArrays} for a different proof.

	For alphabets of the form $M = \F_m$ with a prime power $m$, the (extended) Reed-Solomon codes provide linear $(n,m^t,n-t+1)_m$ MDS codes for all $n\in \{0,\ldots,m+1\}$ and $t\in\{0,\ldots,n\}$.
	In the orthogonal array setting, this construction already appears in~\cite{Bush-1952-AnnMathStatistics23[3]:426-434};
	its standard coding-theoretic name comes from the later independent construction by Reed and Solomon~\cite{Reed-Solomon-1960-JSIAM8[2]:300-304}.
	In the case where $m$ is a power of two and $t\in\{3,m-1\}$, they can be extended further to a linear $(m+2,m^t,m+3-t)_m$ MDS code, essentially corresponding to hyperovals in the projective plane $\PG(2,m)$ under the geometric correspondence described in~\cite{Dodunekov-Simonis-1998-ElecJComb5:R37}.
	The famous MDS conjecture, originating in Segre's 1955 article~\cite{Segre-1955-AnnDMPA39:357-379}, states that the length $n$ of a linear MDS code is at most $m+1$, apart from the trivial cases $t\in\{0,1,n-1,n\}$, where the length is unrestricted, and the two above-mentioned exceptions in even characteristic, where length $m+2$ can occur.
	Dropping the assumptions that the codes are linear and that $m$ is a prime power yields an extension of the MDS conjecture to unrestricted codes.%
	\footnote{
		For non-prime powers $m$, not all parameters of this form need to be realizable.
		It is known that any $(n,m^2,n-1)_m$ MDS code corresponds to a set of $n-2$ pairwise orthogonal Latin squares of size $m\times m$.
		Thus, for example, by the non-existence of a pair of orthogonal Latin squares of order $6$, which is precisely the insolubility of Euler's thirty-six officers problem, there are no senary $(n,36,n-1)_6$ MDS codes with $n \geq 4$.
		Moreover, the realizability question for the parameters of unrestricted MDS codes includes the prime power conjecture for the orders of finite projective planes.
	}
	The MDS conjecture has been established in certain subcases, but remains open in general.
	Significant progress was made in \cite{Ball-2012-JEurMathSocJEMS14[3]:733-748}, where it was proven for linear codes over all prime fields $\F_m$.

	Theorem~\ref{thm:general:steiner_block_intersection_distribution} recovers the local distance distribution of MDS codes.
	In the special case of linear MDS codes over finite fields $M = \F_m$, where the local distance distribution equals the weight distribution, this result was first obtained independently in~\cite{Assmus-Mattson-Turyn-1965-AFCRL67_332, Kasami-Lin-Peterson-1966-IEEETIT12[2]:274, Forney-1966-MITPressResMonogr37}.
	Still formulated as the weight distribution of a linear MDS code, the core double-counting argument that also works for unrestricted codes seems to have appeared first in~\cite[Th.~6]{Goethals-1969-PhilRR24:145-159}.
	Based on this argument, essentially the same as the one used in our proof, a formulation explicitly covering unrestricted codes over arbitrary alphabets can be found in~\cite[p.~207]{Heise-Quattrocchi-1983-Codierungstheorie_1st}.

	The classic monograph of MacWilliams and Sloane~\cite{MacWilliams-Sloane-1977-The_Theory_of_Error_Correcting_Codes_I} devotes its Chapter~11 to linear MDS codes, describing the subject as \enquote{one of the most fascinating chapters in all of coding theory} and the uniqueness of their weight distribution is called \enquote{surprising}.
	A further, more recent standard source on MDS codes is \cite[Ch.~6]{Ball-2020-ACourseInAlgebraicErrorCorrectingCodes}.
\end{example}

\begin{example}[$q$-Hamming semilattices: Steiner systems and MRD codes]\label{ex:codes_q_hamming}\thmheadernewline
	Codes in the $q$-Hamming semilattice $H_q(m,n)$ are $m\times n$ rank-metric codes over $\F_q$.
	Surveys are found in~\cite{Gorla-Ravagnani-2018-SignalsCommunTechnol:3-23} and~\cite{Bartz-Holzbaur-Liu-Puchinger-Renner-WachterZeh-2022-FoundTrendsCommunInfTheory19[3]:390-546}.
	Theorem~\ref{thm:singleton}, including the characterization of the equality cases, is found under the name \emph{Singleton bound} in \cite[p.~227]{Delsarte-1978-JCTSA25[3]:226-241}.
	Steiner systems of strength $t$ in $H_q(m,n)$ correspond to $m \times n$ MRD codes over $\F_q$ of size $q^{mt}$ and minimum rank distance $n-t+1$.
	Remarkably, and in sharp contrast to the situation in the ordinary Hamming semilattice, they exist for all parameters for which the semilattice $H_q(m,n)$ is short, i.\,e., for $m \geq n$.
	The first construction of MRD codes appeared as \emph{Singleton systems} in \cite{Delsarte-1978-JCTSA25[3]:226-241} and was later rediscovered in \cite{Gabidulin-1985-ProblemsInformTransm21[1]:1-12}, which led to the now-standard name \emph{Gabidulin codes}, and again in \cite{Roth-1991-IEEETIT37[2]:328-336}.
	The construction yields $\F_q$-linear MRD codes and may be viewed as a $q$-analog of the Reed-Solomon construction.

	Theorem~\ref{thm:general:steiner_block_intersection_distribution} recovers the local distance distribution of MRD codes.
	The corresponding statement for the inner distribution, which is essentially the local distance distribution averaged over all codewords, was proven by Delsarte in~\cite[Th.~5.6]{Delsarte-1978-JCTSA25[3]:226-241} using the theory of association schemes; the proof involves rather complicated eigenvalue expressions for the $q$-Hamming scheme, namely the corresponding generalized Krawtchouk polynomials.
	By the translation-invariance of the rank metric, linear MRD codes $C$ are \emph{distance-homogeneous}, meaning that the local distance distribution at $c$ does not depend on the chosen codeword $c\in C$.
	It follows that this common local distance distribution agrees with the inner distribution and, by linearity, also with the rank distribution of $C$.
	Therefore, Delsarte's result implies the general result for $\F_q$-linear MRD codes, which was also found in~\cite[Th.~5]{Gabidulin-1985-ProblemsInformTransm21[1]:1-12} and~\cite[Th.~3]{Dumas-Gow-McGuire-Sheekey-2010-LAA433[1]:191-202} by a combination of linear algebra and counting arguments, and in~\cite[Cor.~44 and Rem.~46]{Ravagnani-2016-DCC80[1]:197-216}, using duality and MacWilliams identities for linear rank-metric codes.%
	\footnote{
		Note that the local distance distribution is not stated in closed form in~\cite{Ravagnani-2016-DCC80[1]:197-216}.
		Rather,~\cite[Rem.~46]{Ravagnani-2016-DCC80[1]:197-216} provides a recursive formula for the rank distribution of a nonzero linear MRD code.
	}
	The latter approach was subsequently extended to a more general setting and to unrestricted codes in~\cite{Ravagnani-2018-DCC86[9]:2035-2063}.
	Corollary~51 of that source implies, in particular, that unrestricted MRD codes, and also MDS codes in the corresponding group setting, are distance-homogeneous; however, no explicit closed formula for the local distance distribution is included.
	Taken together with the explicit formula from~\cite[Th.~5.6]{Delsarte-1978-JCTSA25[3]:226-241}, this yields the full content of Theorem~\ref{thm:general:steiner_block_intersection_distribution} for MRD codes and appears to be the first published route to the result in that generality.
	However, we are not aware of a source that applies the combinatorial double-counting argument directly to obtain this formula for unrestricted MRD codes.
\end{example}

\section{Application to perfect matchings}\label{sec:matchings}

In this section, we apply our theory to the perfect matchings of a complete graph.
Let $n$ be a nonnegative integer and let $K_{2n}$ be the complete graph on $2n$ vertices.
A \emph{matching} is a set of pairwise disjoint edges.
A matching $M$ of $K_{2n}$ is called \emph{perfect} if $\#M = n$.
Let $X$ be the set of all matchings of $K_{2n}$.
Then $(X,{\subseteq})$ is a finite $\cap$-semilattice, satisfying the regularity properties \ref{regular:theta}, \ref{regular:mu}, and \ref{regular:nu} with
\begin{align*}
        \theta(r,s) &= \binom{2(n-r)}{2(s-r)}(2(s-r)-1)!!\text{,} &
        \mu(r,s) &= \binom{n-r}{s-r}\text{,}
        &\nu(r,s) &= \binom{s}{r}\text{,}
\end{align*}
where for $k\in\N$,
\[
        (2k-1)!! = \begin{cases}
		1\cdot 3 \cdot 5 \cdot \ldots \cdot (2k-1) & \text{if }k \geq 1\text{,} \\
		1 & \text{if }k = 0
	\end{cases}
\]
is the double factorial, counting the number of perfect matchings of a complete graph on $2k$ vertices.
The semilattice $(X,{\subseteq})$ is graded with height function $h(M) = \#M$.
However, $(X,{\subseteq})$ is not short, since a matching with $n-1$ edges has a unique completion to a perfect matching and hence, cannot be represented as the intersection of two perfect matchings.

One can also study all matchings of $K_{2n}$ with at most $k$ edges for some nonnegative integer $k \leq n$.
If $k < n$, then $\mu(r,s)=\binom{k-r}{s-r}$, while the formulas for $\nu(r,s)$ and $\theta(r,s)$ remain unchanged.
Here, we focus on the extremal case $k = n$.

We emphasize that for $n \geq 4$, the top level $\Omega$ of all perfect matchings does not give rise to an association scheme from these meet-height relations.
For an explicit counterexample, notice that the number $\pi(0,2,2)$ is not well-defined.
Take the matchings $a=\{13,24\}$ and $b = \{12,34,56,78\}$ of $K_8$.
Then $a \wedge b = \emptyset$.
The only perfect matching that contains~$a$ and shares precisely $2$ edges with $b$ is $\{13,24,56,78\}$, so the corresponding count is $1$.
However, if $\widetilde{a} = \{23,45\}$, then again $\widetilde{a} \wedge b = \emptyset$, but now there is no perfect matching containing $\widetilde{a}$ that shares precisely $2$ edges with $b$, so the corresponding count is $0$.
Clearly, the same construction extends to any $n\geq 4$: add the same $n-4$ disjoint edges to both $a$ and $b$, so that $h(a\wedge b)=n-4$, and compare the corresponding values of $\pi(n-4,n-2,n-2)$.

We point out that one can define an association scheme on the perfect matchings of~$K_{2n}$ by defining finer relations than the ones coming from the height function $h$; see~\cite{Bamberg-Klawuhn-2026-AlgebrComb9[3]:789-809} for details.
However, the semilattice is regular in the sense of this article, and hence we can study codes and designs of perfect matchings without the structure of an association scheme.

A $t$-design $D\subseteq\Omega$ is precisely an $(n-t,1,\ldots,1)$-factorization as studied in~\cite{Bamberg-Klawuhn-2026-AlgebrComb9[3]:789-809}.
That is, $D$ is a subset of the perfect matchings of~$K_{2n}$ with the property that every set of $t$ disjoint edges of $K_{2n}$ is contained in a constant number of elements of $D$.
By Fact~\ref{fct:general:lambda_i}, every $t$-design $D$ of strength $t$ and index $\lambda$ is also an $s$-design for all $s\in\{0,\ldots,t\}$, and the associated index is
\[
	\lambda_s = \frac{(2(n-s) - 1)!!}{(2(n-t)-1)!!} \cdot \lambda\text{.}
\]
The divisibility conditions from Theorem~\ref{thm:divisibility}\ref{thm:divisibility:combined} reduce to $1 \mid \lambda$, so the integrality conditions for $t$-designs of perfect matchings are trivial.

The existence problem for $t$-designs of perfect matchings remains wide open.
Cameron constructed $2$-designs of index~$1$, i.\,e., $2$-Steiner systems, using hyperovals in finite projective planes~\cite[Th.~7.3.(i)]{Cameron-1976-ParallelismsCompleteDesigns}.
No constructions of nontrivial $t$-designs with $t \geq 3$ are currently known.

Theorem~\ref{thm:general:mendelsohn} yields the following Mendelsohn equations for $t$-designs of perfect matchings.

\begin{corollary}\label{cor:matchings:mendelsohn}
Let $n$ be a nonnegative integer and let $\Omega$ be the set of perfect matchings of the complete graph $K_{2n}$.
Let $D \subseteq \Omega$ be a $t$-design of index $\lambda$ in the perfect matching semilattice and let $M$ be a (not necessarily perfect) matching of $K_{2n}$.
Then the following systems of equations hold, and they are equivalent.
	\begin{enumerate}[(a)]
		\item\label{cor:matchings:mendelsohn:mendelsohn}\emph{Mendelsohn equations.}\nobreak\hspace{1em}\ignorespaces
		For all $r\in\{0,\ldots,t\}$,
		\[
			\sum_{s=r}^n \binom{s}{r} \alpha_s(M) = \binom{\#M}{r} \lambda_r\text{.}
		\]
		The right-hand side admits the alternative expressions
		\[
			\binom{\#M}{r} \lambda_r
			= \binom{\#M}{r} \frac{(2(n-r)-1)!!}{(2(n-t)-1)!!} \lambda
			= \binom{\#M}{r} \frac{(2(n-r)-1)!!}{(2n-1)!!} \#D\text{.}
		\]
		\item\label{cor:matchings:mendelsohn:koehler}\emph{Köhler parametrization.}\nobreak\hspace{1em}\ignorespaces
		For all $r\in\{0,\ldots,t\}$,
		\begin{align*}
			\alpha_r(M)
			& = \binom{\#M}{r} \sum_{s=0}^{t-r} (-1)^s\binom{\#M-r}{s} \lambda_{r+s} \\
			& \phantom{~=~} - \sum_{u=t+1}^n \binom{u}{r}\left(\sum_{s=0}^{t-r} (-1)^{s}\binom{u-r}{s}\right)\alpha_u(M)\text{,}
		\end{align*}
		where the inner sum on the right-hand side admits the alternative form
		\[
			\sum_{s=0}^{t-r} (-1)^s\binom{u-r}{s} = -\sum_{s=t+1-r}^{u-r} (-1)^{s}\binom{u-r}{s}\text{.}
		\]
	\end{enumerate}
\end{corollary}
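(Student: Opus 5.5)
This is a direct specialization of Theorem~\ref{thm:general:mendelsohn} to the perfect matching semilattice. We use the parameters stated above, $\nu(r,s)=\binom{s}{r}$ and $\theta(r)=\theta(r,n)=\binom{2(n-r)}{2(n-r)}(2(n-r)-1)!!=(2(n-r)-1)!!$. Since $h(M)=\#M$, the factor $\nu(r,h(a))$ becomes $\binom{\#M}{r}$. The proof follows the pattern of Corollary~\ref{cor:hamming:mendelsohn}.

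For Part~\ref{cor:matchings:mendelsohn:mendelsohn}, we substitute directly into Theorem~\ref{thm:general:mendelsohn}\ref{thm:general:mendelsohn:mendelsohn}. The two alternative expressions come from Fact~\ref{fct:general:lambda_i}. The first is $\lambda_r=\frac{\theta(r)}{\theta(t)}\lambda$ with the double factorials inserted. The second is $\lambda_r=\frac{\theta(r)}{\theta(0)}\#D$ with $\theta(0)=(2n-1)!!$.

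For Part~\ref{cor:matchings:mendelsohn:koehler}, we start from Theorem~\ref{thm:general:mendelsohn}\ref{thm:general:mendelsohn:koehler}. The matrix $(\nu)$ is the ordinary upper triangular Pascal matrix, i.e.\ the case $q=1$ of the remark in Section~\ref{sec:classical_lattices}. Hence $\nu'(r,s)=(-1)^{s-r}\binom{s}{r}$. The check is that $\sum_{s}(-1)^{s-r}\binom{s}{r}\binom{u}{s}=\binom{u}{r}\sum_{j}(-1)^j\binom{u-r}{j}=\delta_{ru}$.

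In the first sum we get $\sum_{s=r}^t(-1)^{s-r}\binom{s}{r}\binom{\#M}{s}\lambda_s$. The trinomial identity $\binom{\#M}{s}\binom{s}{r}=\binom{\#M}{r}\binom{\#M-r}{s-r}$ lets us pull out $\binom{\#M}{r}$. Reindexing by $s\mapsto r+s$ then gives the stated form with $\lambda_{r+s}$.

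The inner coefficient is handled the same way: $\sum_{s=r}^t(-1)^{s-r}\binom{s}{r}\binom{u}{s}=\binom{u}{r}\sum_{j=0}^{t-r}(-1)^j\binom{u-r}{j}$. For the alternative form, we use the vanishing of the full alternating sum $\sum_{j=0}^{u-r}(-1)^j\binom{u-r}{j}=0$, valid since $u>t\ge r$. Equivalently, one can transcribe the alternative form in Theorem~\ref{thm:general:mendelsohn}\ref{thm:general:mendelsohn:koehler} the same way.

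Equivalence of the two systems is inherited from the theorem. The only point needing care is the bookkeeping in the reindexing and the sign convention for $\nu'$, which is routine.
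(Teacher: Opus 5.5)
Your proposal is correct and matches the paper's proof. The paper likewise just specializes Theorem~\ref{thm:general:mendelsohn} as in Corollary~\ref{cor:hamming:mendelsohn}, inserting $\nu(r,s)=\binom{s}{r}$, $\nu'(r,s)=(-1)^{s-r}\binom{s}{r}$ and $\theta(r)=(2(n-r)-1)!!$, then applying the trinomial identity, reindexing, and using the vanishing full alternating sum for $u>t\ge r$ for the alternative form.
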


\begin{proof}
	We apply the same strategy as in the proof of Corollary~\ref{cor:hamming:mendelsohn}.
\end{proof}

Theorem~\ref{thm:general:steiner_block_intersection_distribution} shows that the block intersection distribution of a Steiner system is uniquely determined by its parameters.
By similar transformations as in the proof of Corollary~\ref{cor:matchings:mendelsohn}\ref{cor:matchings:mendelsohn:koehler}, we get the following explicit formula.

\begin{corollary}\label{cor:matchings:steiner_block_intersection_distribution}
	Let $n$ be a nonnegative integer and let $\Omega$ be the set of perfect matchings of the complete graph $K_{2n}$.
	Let $D \subseteq \Omega$ be a Steiner system of strength $t$, i.\,e., a $t$-design of index $1$.
	For each perfect matching $M \in D$, the intersection numbers at $M$ are given by
	\[
		\alpha_r(M) = \begin{cases}
			\displaystyle
			\binom{n}{r} \sum_{s=0}^{t-1-r} (-1)^s \binom{n-r}{s}\!\! \left( \frac{(2(n-r-s)-1)!!}{(2(n-t)-1)!!} -1 \right) & \text{if } r\in\{0,\ldots,t-1\}\text{,} \\[1mm]
			0 & \text{if } r \in \{t,\ldots,n-1\}\text{,} \\
			1 & \text{if } r = n\text{.} \\
		\end{cases}
	\]
\end{corollary}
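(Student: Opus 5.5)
The plan is to specialize Theorem~\ref{thm:general:steiner_block_intersection_distribution} to the perfect matching semilattice, in the same way that Corollary~\ref{cor:johnson_steiner_intersection_distribution} was obtained from it for the $q$-Johnson semilattices. The parameters $\theta$, $\mu$, and $\nu$ are the ones listed at the beginning of Section~\ref{sec:matchings}. The cases $r\in\{t,\ldots,n-1\}$ and $r=n$ carry over verbatim from the theorem, since there $\alpha_r(M)\in\{0,1\}$ independently of the semilattice. So only $r\in\{0,\ldots,t-1\}$ requires work.

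For these $r$, I start from
\[
	\alpha_r(M)=\sum_{s=r}^{t-1}\nu'(r,s)\nu(s,n)\Bigl(\frac{\theta(s)}{\theta(t)}-1\Bigr).
\]
Since $\nu(r,s)=\binom{s}{r}$ is the ordinary Pascal matrix, the $q=1$ case of the remark on the $q$-Pascal matrix gives $\nu'(r,s)=(-1)^{s-r}\binom{s}{r}$. The one-argument $\theta$ is $\theta(s)=\theta(s,n)=\binom{2(n-s)}{2(n-s)}(2(n-s)-1)!!=(2(n-s)-1)!!$, so
\[
	\frac{\theta(s)}{\theta(t)}=\frac{(2(n-s)-1)!!}{(2(n-t)-1)!!}.
\]
This agrees with the expression for $\lambda_s$ stated earlier in the section, and it is the only step where the matching-specific data enter.

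Next, I apply the trinomial identity $\binom{s}{r}\binom{n}{s}=\binom{n}{r}\binom{n-r}{s-r}$ to the product $\nu'(r,s)\nu(s,n)$. This lets me pull $\binom{n}{r}$ out of the sum. Reindexing with $s\mapsto r+s$, the new index runs from $0$ to $t-1-r$, the coefficient becomes $(-1)^s\binom{n-r}{s}$, and the double factorial becomes $(2(n-r-s)-1)!!$. This is exactly the claimed formula.

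There is no real obstacle here, only bookkeeping. The one point needing care is to check the border case $r=t-1$, where the sum has a single term and the reindexed upper limit $t-1-r=0$ is consistent. I would also verify that $h(M)=n$ for $M\in\Omega$, which justifies the use of $\nu(s,n)$ in the theorem's formula.
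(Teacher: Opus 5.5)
Your proposal is correct and follows the paper's route. The paper likewise specializes Theorem~\ref{thm:general:steiner_block_intersection_distribution} using $\nu'(r,s)=(-1)^{s-r}\binom{s}{r}$ and $\theta(s)=(2(n-s)-1)!!$, pulls out $\binom{n}{r}$ via the trinomial identity, and reindexes, just as in the proofs of Corollaries~\ref{cor:johnson_steiner_intersection_distribution} and~\ref{cor:matchings:mendelsohn}.
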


\begin{remark}
	For ordinary designs and orthogonal arrays, the formula corresponding to Corollary~\ref{cor:matchings:steiner_block_intersection_distribution} can be used for non-existence results.
	However, evaluating Corollary~\ref{cor:matchings:steiner_block_intersection_distribution} for small values of $n$ and $t$ always yields non-negative numbers and, therefore, does not give a contradiction.
	In particular, the non-existence of a Steiner system of strength~$3$ in $K_{12}$ that was shown in~\cite[Sec.~5]{Bamberg-Klawuhn-2026-AlgebrComb9[3]:789-809} does not follow from inspecting the intersection numbers.
\end{remark}

Finally, we point out that our definition of a $d$-code coincides with the notion of a $d$-code introduced in \cite[Sec.~6.5]{Klawuhn-2026-UPB}.
Here, the distance of two perfect matchings is measured in terms of the size of their intersection.
The Singleton bound in Theorem~\ref{thm:singleton} specializes to \cite[Th.~6.5.3]{Klawuhn-2026-UPB}.
This shows that the concept of $d$-codes in semilattices is sensible, even in the absence of the structure of an association scheme.

\section*{Acknowledgements}

This project emerged during a research visit of the second author to the University of Bayreuth.
He would like to thank the Deutsche Forschungsgemeinschaft (DFG, German Research Foundation) -- Project-ID 491392403 -- TRR 358 for funding the research visit.

\printbibliography
\end{document}